\providecommand\@dotsep{5}
\def\listtodoname{List of Todos}
\def\listoftodos{\@starttoc{tdo}\listtodoname}
\newtheorem{theorem}{Theorem}[section]
\newtheorem{proposition}[theorem]{Proposition}
\newtheorem{corollary}[theorem]{Corollary}
\newtheorem{lemma}[theorem]{Lemma}
  \theoremstyle{definition}
\newtheorem{remark}[theorem]{Remark}
\newtheorem{question}[theorem]{Question}
\newcommand{\dbN}{\mathbb{N}}
\newcommand{\dbR}{\mathbb{R}}
\newcommand{\dbZ}{\mathbb{Z}}
\newcommand{\Z}{\mathbb{Z}}
\newcommand{\calF}{{\mathcal F}}
\newcommand{\calN}{{\mathcal N}}
\newcommand{\calO}{{\mathcal O}}
\newcommand{\orf}[1]{O_\calF #1}
\newcommand{\nbeq}{\begin{equation}}
\newcommand{\neeq}{\end{equation}}
\newcommand{\beq}{\begin{equation*}}
\newcommand{\eeq}{\end{equation*}}
\DeclareMathOperator{\cd}{cd}
\DeclareMathOperator{\vcd}{vcd}
\DeclareMathOperator{\cdfin}{\underline{cd}}
\DeclareMathOperator{\gdfin}{\underline{gd}}
\DeclareMathOperator{\mcg}{Mcg}
\newcommand{\Ngnb}{N_{g,n}^{b}}
\newcommand{\calNgnb}{\calN_{g,n}^{b}}
\newcommand{\Sgnb}{S_{g,n}^{b}}
\newcommand{\Gammagnb}{\Gamma_{g,n}^{b}}
\newcommand{\Gammagn}{\Gamma_{g,n}}
\newcommand{\Ngn}[3][ ]{N_{#2,#3}^{#1}}
\newcommand{\calNgn}[3][ ]{\calN_{#2,#3}^{#1}}
\newcommand{\calSgn}[3][ ]{\Gamma_{#2,#3}^{#1}}
\newcommand{\vcdNgoverF} {\frac{\vcd(\calN_g)+1}{|F|}}
\DeclareMathOperator{\Ker}{Ker}
\DeclareMathOperator{\Ima}{Im}
\begin{document}

\title[On the dimensions of non-orientable mapping class group]{On the dimensions of mapping class groups of non-orientable surfaces}




\author[C. E. Hidber]{Cristhian E. Hidber}
\address{Centro de Ciencias Matemáticas, UNAM Universidad Nacional Autónoma de México, Morelia, Mich. 58190, Mexico}
\email{hidber@matmor.unam.mx	}

\author[L. J. Sánchez Saldaña]{Luis Jorge S\'anchez Salda\~na}
\address{Departamento de Matemáticas, Facultad de Ciencias, Universidad Nacional Autónoma de México, Circuito Exterior S/N, Cd. Universitaria, Colonia Copilco el Bajo, Delegación Coyoacán, 04510, México D.F., Mexico}
\email{luisjorge@ciencias.unam.mx}

\author[A. Trujillo-Negrete]{Alejandra Trujillo-Negrete}
\address{Centro de Investigaci\'on en Matem\'aticas, A. C.  Jalisco S/N, Col. Valenciana CP: 36023 Guanajuato, Gto, México}
\email{alejandra.trujillo@cimat.mx}

\subjclass[2010]{Primary 20F34, 20J05, 20F65}

\date{}


\keywords{Mapping class group, Non-orientable surface, virtual cohomological dimension, proper cohomological dimension, proper geometric  dimension}

\begin{abstract} Let $\calN_g$ be the mapping class group of a non-orientable closed surface.
We prove that the proper cohomological dimension, the proper geometric dimension, and the virtual cohomological dimension of $\calN_g$ are equal whenever $g\neq 4,5$. In particular, there exists a model for the classifying space  of $\calN_g$ for proper actions of dimension $\vcd(\calN_g)=2g-5$. 
Similar results are obtained for the mapping class group of a non-orientable surface with boundaries and possibly punctures, and for the pure mapping class group of a non-orientable surface with punctures and without boundaries.
\end{abstract}
\maketitle

\section{Introduction}

Let $G$ be a group. In the literature we can find several notions of dimension defined for $G$. In the present paper we are mainly interested in the \emph{proper geometric dimension} $\gdfin(G)$, the \emph{proper cohomological dimension} $\cdfin(G)$, and the virtual cohomological dimension $\vcd(G)$ of $G$ (provided $G$ is virtually torsion free).

Let $\calF$ be the family of finite subgroups of $G$. A model for the classifying space of $G$ for proper actions $\underline{E}G$ is a $G$-CW-complex $X$ such that the fixed point set $X^F$ is empty if $F\notin \calF$ and contractible otherwise, in particular, $X^F$ is non-empty if $F$ is a finite subgroup of $G$. Such a model always exists and is unique up to proper $G$-homotopy. The \emph{proper geometric dimension of $G$}, denoted $\gdfin(G)$, is the minimum $n$  for which there exists an $n$-dimensional model for $\underline{E}G$.

On the other hand, we have the so-called restricted orbit category $\calO_\calF G$, which has as objects the homogeneous $G$-spaces $G/H$, $H\in \calF$, and morphisms are  $G$-maps. A  \emph{$\calO_\calF G$-module} is a contravariant functor from $\calO_\calF G$ to the category of abelian groups, and a morphism between two $\calO_\calF G$-modules is a natural transformation of the underlying functors. Denote by $\calO_\calF G$-mod the category of $\calO_\calF G$-modules. It turns out that $\calO_\calF G$-mod is an abelian category with enough projectives. Thus we can define a $G$-cohomology theory for $G$-spaces $H_\calF^*(-;M)$ for every $\calO_\calF G$-module $M$ (see \cite[p.~7]{MV03}).  The
\emph{proper cohomological dimension of $G$}---denoted $\cdfin(G)$---is the
largest non-negative $n\in\dbZ$ for which the cohomology group $H^n_{\calF}(G;M)=H^n_{\calF}(E_\calF G;M)$
is nontrivial for some $M\in\mathcal{O}_{\calF}G \textrm{-mod}$. Equivalently, $\cd_\calF(G)$ is the length of the shortest projective resolution of the constant $\orf{G}$-module $\dbZ_\calF$, where $\dbZ_\calF$ is given by $\dbZ_\calF(G/H)=\dbZ$, for all $H\in \calF$, and every morphism of $\orf{G}$ goes to the identity function.

The \emph{cohomological dimension $\cd(H)$ of a group $H$} is the length of shortest projective resolution, in the category of $H$-modules, for the trivial $H$-module $\dbZ$.
Provided $G$ is virtually torsion free, this is, that $G$ contains a torsion free subgroup $H$ of finite index, the \emph{virtual cohomological dimension of $G$} is defined to be $\vcd(G)=\cd(H)$. A well-known theorem of Serre stablishes that $\vcd(G)$ is well definied, that is, it does not depend on the choice of the finite index torsion free subgroup $H$ of $G$ (see for example \cite[p.~190]{Br94}).\\

For every group $G$, by \cite[Theorem~2]{BLN01} we have the following inequalities
\begin{equation}\label{eq:inequalities:dimensions}
    \vcd(G)\leq \cdfin(G)\leq \gdfin(G)\leq \max\{3,\cdfin(G)\}.
\end{equation}
    
As a consequence, if $\cdfin(G)\geq 3$, then $\cdfin(G)=\gdfin(G)$. Moreover, the only possible scenario where  $\cdfin(G)$ might be non-equal to $\gdfin(G)$, is provided by the existence of a group $G$ with $\cdfin(G)=2$  and $\gdfin(G)=3$. The existence of such a group is unkwown.  The first inequality in \eqref{eq:inequalities:dimensions} may be strict as proved in \cite{LN03,LP17,DS17}. Also the second inequality may be strict, this is known as the (generalized) Eilenberg-Ganea problem. Examples of groups for which the second inequality is strict are constructed in \cite{BLN01,LP17,SS19}.

On the other hand, the $\vcd(G)$ is known to be equal to $\cdfin(G)$ for the following classes of groups: elementary amenable groups of type $\mathrm{FP}_\infty$ \cite{KMPN09}, $\mathrm{SL}_n(\dbZ)$ \cite{Ash84}, $\mathrm{Out}(F_n)$ \cite{Vo02}, the mapping class group of any \emph{orientable} surface with boundary components and punctures \cite{AMP14}, any lattice in a classical simple Lie group \cite{ADMS17}, any lattice in the group of isometries of a symmetric space of
non-compact type without Euclidean factors \cite{La19}, groups acting chamber transitively on a Euclidean building \cite{DMP16}, and groups satisfying properties (M), (NM) and that admit a cocompact model for $\underline{E}G$ \cite{SS20}.\\

Let $\Ngnb$ denote the connected  \emph{non-orientable} surface of genus $g$ with $n$ distinguished points (also called punctures) and $b$ boundary components. The \emph{mapping class group} $\mcg(\Ngnb) = \calNgnb$ of $\Ngnb$ is the group of isotopy classes of self-homeomorphisms of $\Ngnb$ which take the set of distinguished points to itself and fix the boundary components pointwise. For an orientable surface $\Sgnb$ of genus $g$ with $n$ distinguished points and $b$ boundary components, the \emph{mapping class group} $\mcg(\Sgnb)=\Gammagnb$ is defined similarly but now considering only orientation preserving homeomorphisms. For $n\geq 1$, the \emph{pure mapping class group of $\Ngnb$}, denoted $P\calNgnb$, is the subgroup of $\calNgnb$ of elements that fix pointwise the set of punctures.

Whenever we consider a a surface without punctures or boundaries, we will omit the corresponding index from the notation. For instance, $\Ngn{g}{n}$ is the surface with $n$ punctures and genus $g$ without boundary components, and $\calNgn{g}{n}$ is its corresponding mapping class group.

In \cite{AMP14} Aramayona and Martínez-Pérez proved that, for all $g\geq 0$,
\[\cdfin(\Gamma_g)=\gdfin(\Gamma_g)=\vcd(\Gamma_g).\] Moreover, since $\Sgnb$ is torsion free for $b>0$, Aramayona and Martínez-Pérez obtained as a corollary of their theorem that, for all $b,g,n\geq 0$,
\[\cdfin(\Gammagnb)=\gdfin(\Gammagnb)=\vcd(\Gammagnb).\]

In the present paper we obtain the analogue of Aramayona and Martínez-Perez theorem for the non-orientable case. We closely follow their strategy. Our main result is the following.

\begin{theorem}\label{thm:main}
Let $g\geq 1$. If $g\neq4,5$, then
\[ \vcd(\calN_g)=\cdfin(\calN_g)=\gdfin(\calN_g). \]
\end{theorem}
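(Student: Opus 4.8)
The plan is to follow the strategy of Aramayona–Martínez-Pérez for the orientable case, reducing everything to the inequality $\cdfin(\calN_g) \le \vcd(\calN_g)$; indeed, the reverse inequality $\vcd(\calN_g)\le\cdfin(\calN_g)$ is the first inequality of \eqref{eq:inequalities:dimensions}, and since $\vcd(\calN_g)=2g-5\ge 3$ precisely when $g\ge 4$ — and we may check the small cases $g=1,2,3$ by hand (these groups are finite or virtually free, so all three dimensions are easily computed) — the equality $\cdfin(\calN_g)=\gdfin(\calN_g)$ will follow from \eqref{eq:inequalities:dimensions} once we know $\cdfin(\calN_g)\ge 3$. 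So the whole content is the upper bound $\cdfin(\calN_g)\le 2g-5$ for $g\ge 6$.

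To get this upper bound I would construct an explicit model for $\underline{E}\calN_g$ of dimension $2g-5$, or equivalently build a short enough projective resolution of $\dbZ_\calF$ over $\orf{\calN_g}$. The key geometric input is an action of $\calN_g$ on a suitable simplicial complex — the natural candidate is (a variant of) the complex of curves, or a Teichmüller-type space / the arc complex associated to $\Ngn{g}{1}$, on which the mapping class group acts with nice (finite) stabilizers. One first uses a Serre-type dévissage: pick a finite-index torsion-free subgroup $\Gamma\le\calN_g$ and a cocompact $\Gamma$-CW model realizing $\vcd$, then use the pushout/induction machinery (as in \cite{LM00,MP02} type arguments, or directly the Bredon-cohomology Mayer–Vietoris spectral sequence) to assemble a model for $\underline{E}\calN_g$ whose dimension is controlled by $\vcd(\Gamma)=\vcd(\calN_g)$ together with the proper (Bredon) cohomological dimensions of the finite-subgroup normalizers/stabilizers that appear. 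Concretely, for each finite subgroup $F\le\calN_g$ one analyzes the quotient surface $\Ngn{g}{n}/F$ (an orbifold), identifies the centralizer $C_{\calN_g}(F)$ with a mapping class group of a lower-complexity surface via the Birman–Hilden / Nielsen realization correspondence, and bounds $\vcd$ of that centralizer by a Riemann–Hurwitz computation, exactly as in the orientable case but now for non-orientable quotients.

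The main obstacle — and the reason $g=4,5$ are excluded — is controlling the dimension contribution of the finite subgroups: one needs, for every finite $F\le\calN_g$, an inequality of the form $\vcd(C_{\calN_g}(F)) + d(F) \le \vcd(\calN_g)$, where $d(F)$ measures how the normalizer sits over the centralizer (the "defect" coming from $N_{\calN_g}(F)/C_{\calN_g}(F)$ acting on the fixed complex). This is a finite but delicate case analysis over conjugacy classes of finite subgroups, driven by Riemann–Hurwitz for the branched/unbranched covers $\Ngn{g}{n}\to \Ngn{g}{n}/F$; for small genus the estimates fail (the quotient surface is too simple, so its mapping class group is too large relative to $2g-5$), which is exactly the phenomenon isolated in the orientable case and which forces the $g\ne 4,5$ hypothesis here. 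So the heart of the proof is: (i) set up the curve/arc complex and the induction, (ii) reduce to finitely many normalizer estimates, and (iii) carry out the Riemann–Hurwitz bookkeeping, checking that it goes through precisely when $g\ge 6$ (and separately handling $g=1,2,3$ directly).
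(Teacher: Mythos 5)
Your reduction to the single inequality $\cdfin(\calN_g)\le 2g-5$, your handling of $g=1,2,3$ by hand, and the geometric core of your plan (Nielsen realization, passing to the quotient orbifold $N_g/F$, identifying the relevant subquotient with a mapping class group of lower complexity, Riemann--Hurwitz bookkeeping) all match the paper. But there are two genuine gaps. First, the engine is not a model construction: the paper never builds an explicit $\underline{E}\calN_g$ of dimension $2g-5$ via curve/arc complexes or pushouts --- it only recovers $\gdfin=\cdfin$ a posteriori from the inequality $\gdfin\le\max\{3,\cdfin\}$. The upper bound on $\cdfin$ comes from the purely algebraic criterion of \cref{thm:aramayona:martinezperez}: if $\vcd(W_G(F))+\lambda(F)\le m$ for every finite $F\le G$, where $W_G(F)=N_G(F)/F$ and $\lambda(F)$ is the length of the longest chain of subgroups of $F$, then $\cdfin(G)\le m$. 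Your correction term $d(F)$, described as measuring how the normalizer sits over the centralizer, is not the right quantity; the term that must be controlled is $\lambda(F)$, an invariant of $F$ alone, and the entire case analysis consists of playing the elementary bounds $\lambda(F)\le |F|/2$ and $\lambda(F)\le\log_2|F|$ against the Riemann--Hurwitz estimate $(\vcd(\calN_g)+1)/|F|\ge\vcd(WF)-\epsilon$. Without pinning down this term, the ``delicate case analysis'' you defer cannot even be formulated, and that is where essentially all the content of the proof lies.

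Second, your assertion that the bookkeeping ``goes through precisely when $g\ge 6$'' is not accurate. In the hardest case ($O_F$ an orientable genus-zero quotient with at most two elliptic points and boundary components) the general estimates only close for $g\ge 7$; the case $g=6$ requires a separate argument quoting the classification of finite group actions on $N_6$ (maximal order $160$, and no action of a group of order $2^7$) to conclude $\lambda(F)\le 6$ directly. So even after substituting the correct criterion, the plan as written would leave $g=6$ unproved.
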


The first natural thing to notice out of our main theorem is that we are excluding the case $g=4,5$. We do not know whether the conclusion of the main theorem holds in these cases. Anyway, we obtained the following theorem.

\begin{theorem}\label{thm:exeptional:cases}
\[3=\vcd(\calN_4)\leq \cdfin(\calN_4)\leq \vcd(\calN_4)+3=6 \]

and

\[5=\vcd(\calN_5)\leq \cdfin(\calN_5)\leq \vcd(\calN_5)+1=6.\]
\end{theorem}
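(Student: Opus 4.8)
\emph{Setup.} The assertion splits into: the equalities $\vcd(\calN_4)=3$ and $\vcd(\calN_5)=5$; the inequalities $\vcd\leq\cdfin$ for both groups; the upper bounds $\cdfin(\calN_4)\leq 6$ and $\cdfin(\calN_5)\leq 6$; and the trivial arithmetic $\vcd(\calN_4)+3=6$, $\vcd(\calN_5)+1=6$. The middle inequalities are instances of \eqref{eq:inequalities:dimensions}, and the vcd values are the formula $\vcd(\calN_g)=2g-5$ (established earlier for closed non-orientable surfaces) evaluated at $g=4,5$. So the content is the two upper bounds, which I would prove by different means.

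\emph{The surface $N_4$.} Here I would use Teichm\"uller space directly. The Teichm\"uller space $\calT(N_4)$ is contractible of real dimension $3g-6=6$, and $\calN_4$ acts on it properly discontinuously with finite point stabilisers. For every finite subgroup $F\leq\calN_4$ the fixed-point set $\calT(N_4)^F$ is non-empty (Nielsen realisation for non-orientable surfaces, which one obtains from Kerckhoff's theorem via the orientation double cover, a closed orientable surface of genus $3$) and contractible (it is the Teichm\"uller space of the quotient orbifold). Hence $\calT(N_4)$ is a $6$-dimensional model for $\underline{E}\calN_4$, so $\cdfin(\calN_4)\leq\gdfin(\calN_4)\leq 6$. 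Note that the excess $6-\vcd(\calN_4)=3$ is exactly $g-1=\dim\calT(N_g)-\vcd(\calN_g)$ at $g=4$.

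\emph{The surface $N_5$.} Now $\calT(N_5)$ has dimension $3\cdot 5-6=9$, which is too large, so I would instead adapt the argument used to prove \cref{thm:main}. For $g\neq 4,5$ that argument yields a model for $\underline{E}\calN_g$ of dimension $2g-5$; the step that pushes the dimension down from its a priori value $2g-4$ to $2g-5$ is the one that requires the complexity of $N_g$ to be large enough, and this is precisely the step that fails for $g=5$. Equivalently, an induction on complexity expressing $\cdfin(\calN_5)$ through the mapping class groups of the subsurfaces obtained by cutting $N_5$ along one- and two-sided simple closed curves bottoms out at an exceptional low-complexity piece and loses a single dimension. Either way one obtains a model of dimension $2g-4=6$, hence $\cdfin(\calN_5)\leq\gdfin(\calN_5)\leq 6$.

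\emph{Main obstacle.} The delicate step is the analysis for $g=5$: one has to identify exactly which part of the proof of \cref{thm:main} degrades and verify that it costs at most one dimension, so that the bound is $6$ and not larger. By contrast, we do not expect to settle sharpness here; deciding whether $\cdfin(\calN_4)=3$ or $\cdfin(\calN_5)=5$ appears to need genuinely new input, which is why only the estimates above are recorded.
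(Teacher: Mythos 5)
Your treatment of $N_4$ is correct and genuinely different from the paper's. The paper never touches Teichm\"uller space: for both $g=4$ and $g=5$ it runs the same machine as in the proof of \cref{thm:main}, bounding $\vcd(W_{\calN_g}(F))+\lambda(F)$ over all finite subgroups $F$ and invoking \cref{thm:aramayona:martinezperez}. For $g=4$ the exceptional subgroups (those with $O_F$ orientable, $g_F=0$, $e_F+b\leq 2$, $\vcd(WF)=1$) are handled by reading off from Conder's database \cite{Conder15} that $|F|$ is less than $12$ or equals $12,16,24,48$, whence $\lambda(F)\leq 5$ and $\vcd(WF)+\lambda(F)\leq 6$. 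Your route via the $6$-dimensional contractible $\calT(N_4)$ with contractible fixed-point sets gives the same bound $\cdfin(\calN_4)\leq\gdfin(\calN_4)\leq 6$ more geometrically, provided you justify the contractibility of $\calT(N_4)^F$ (identification with the Teichm\"uller space of the quotient orbifold, or via the embedding of $\calT(N_4)$ into $\calT(S_3)$ as the fixed locus of the deck involution). Since the paper remarks after \cref{corollary:main} that it does not know whether this Teichm\"uller space admits a low-dimensional spine, your observation that the raw dimension $3g-6$ already suffices at $g=4$ is a genuine shortcut for that case.

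The $N_5$ part, however, contains a real gap. You assert that the step of the proof of \cref{thm:main} which fails at $g=5$ ``costs at most one dimension'' and that one obtains a model of dimension $2g-4=6$, but you never identify the failing step nor verify the cost. In the paper the failure is concrete: when $O_F$ is orientable with $g_F=0$ and $e_F+b\leq 2$ one only gets $\vcd(WF)=b_c\geq 1$, and there exist finite $F\leq\calN_5$ (for instance $F\cong S_5$ of order $120$, with quotient a disc having corner points of orders $2,4,5$) with $\lambda(F)=5$, so $\vcd(WF)+\lambda(F)=6>5=\vcd(\calN_5)$. Showing that $6$ is an upper bound over \emph{all} finite subgroups requires the full case analysis of Sections 4 and 5 together with Conder's list of the finite groups acting on $N_5$ (orders $120,72,60,36,24,20,18,16$ or less than $12$, all of length at most $5$); nothing in your sketch supplies this. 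Moreover, your description of the proof of \cref{thm:main} as an induction obtained by cutting along one- and two-sided simple closed curves does not match the paper's actual method (quotient orbifolds and Weyl groups of finite subgroups), so there is no existing argument for you to ``degrade by one dimension.'' As written, the bound $\cdfin(\calN_5)\leq 6$ is unproved.
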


The following corollary is a straightforward consequence of \cref{thm:main} and the definition of $\gdfin(\calN_g)$, but we include it for completeness.

\begin{corollary}\label{corollary:main}
There exists a model for $\underline{E}\calN_g$ of dimension
\[\vcd(\calN_g)=\begin{cases}
0 & \text{if }g= 1,2,\\
2g-5 & \text{if }g=3 \text{ or }g\geq 6.
\end{cases}\]
Moreover, this is the minimal dimension possible for a model of $\underline{E}\calN_g$.
\end{corollary}

As a remark, we do not know if the model in the statement of \cref{corollary:main} can be realized as a subspace of the Teichmüller space described in \cite{PP16}, i.e. we do not know whether the Teichmüller space of $\calN_g$ has a spine of dimension $\vcd(\calN_g)$.

In the case we have a surface $N_{g,n}$ with $n\geq 1$ we obtained the following result as a consequence of \cref{thm:main} and \cref{thm:exeptional:cases}.

\begin{theorem}\label{thm:main:with:punctures}
Let $n\geq 1$. 
\begin{enumerate}
 \item If $g=1,2,3 $ or $g\geq 6$, then $\gdfin (P\calN_{g,n})=\cdfin (P\calN_{g,n}) =\vcd (P\calN_{g,n})$.
  \item If $g=4$, then 
 $\vcd (P\calN_{g,n})\leq \gdfin (P\calN_{g,n}) \leq \vcd (P\calN_{g,n}) +3$.
 \item If $g=5$, then 
 $\vcd (P\calN_{g,n})\leq \gdfin (P\calN_{g,n}) \leq \vcd (P\calN_{g,n}) +1$.
\end{enumerate}
\end{theorem}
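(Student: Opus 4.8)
The plan is to reduce the statement for $P\calN_{g,n}$ with punctures to the closed, puncture-free case handled by \cref{thm:main} and \cref{thm:exeptional:cases}, by means of the Birman exact sequence. Recall that forgetting the $n$ punctures one by one produces a sequence of surjections whose kernels are (roughly) surface groups of the form $\pi_1$ of a non-orientable surface with a few punctures removed; concretely, there is an exact sequence
\[
1 \to \pi_1\big(N_{g,n-1}\setminus\{\text{pt}\}\big) \to P\calN_{g,n} \to P\calN_{g,n-1} \to 1,
\]
valid once $(g,n)$ is in the range where the relevant surface is not sporadic (here the assumption $g\geq 1$ together with $n\geq 1$ is exactly what is needed; one may need to treat the passage $n=1\to n=0$, i.e. $P\calN_{g,1}=\calN_{g,1}$ mapping onto $\calN_g$, as the base case). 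The kernel is a free group (a punctured surface has free fundamental group), hence has $\cd=\gd=1$.

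First I would establish the behaviour of the three dimension functions under such extensions. For $\vcd$, choose a torsion-free finite-index subgroup; the extension restricts to an extension of torsion-free groups, and $\cd$ is subadditive in extensions, giving $\vcd(P\calN_{g,n})\le \vcd(P\calN_{g,n-1})+1$; the reverse inequality comes from the fact that $\vcd$ does not drop under passing to a subgroup of infinite index in a favorable situation, or more simply from a known computation of $\vcd(P\calN_{g,n})$ in the literature (it equals $\vcd(\calN_g)+n$ in the non-sporadic range, analogous to the orientable formula). For $\gdfin$, I would invoke the standard pushout/extension estimate: if $1\to K\to G\to Q\to 1$ with $K$ torsion-free of $\gd(K)=\cd(K)=1$, then $\gdfin(G)\le \gdfin(Q)+\gd(K)=\gdfin(Q)+1$, since one can build a model for $\underline{E}G$ fibered over a model for $\underline{E}Q$ with fibers models for $\underline{E}K=EK$ of dimension $1$; and $\cdfin$ satisfies the matching lower bound $\cdfin(G)\ge \cdfin(Q)$ coming from the quotient map together with the Lyndon–Hochschild–Serre type spectral sequence in Bredon cohomology. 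Iterating $n$ times reduces everything to $\calN_g$.

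Assembling the pieces: in case (1), where $g=1,2,3$ or $g\ge 6$, \cref{thm:main} gives $\vcd(\calN_g)=\cdfin(\calN_g)=\gdfin(\calN_g)$, so after adding $n$ (equivalently, after $n$ applications of the Birman sequence, each contributing exactly $1$ to each of the three quantities, the contributions agreeing because the fiber is a free group) we get $\vcd(P\calN_{g,n})=\cdfin(P\calN_{g,n})=\gdfin(P\calN_{g,n})$; one must also check the numerics $\vcd(P\calN_{g,n})=2g-5+n$ are consistent, which follows from the known $\vcd(\calN_g)=2g-5$ in this range together with additivity of $\vcd$ under the Birman sequence. In case (2), $g=4$: by \cref{thm:exeptional:cases} we have $\vcd(\calN_4)=3$ and $\cdfin(\calN_4)\le 6=\vcd(\calN_4)+3$; feeding this through the extension estimates gives $\vcd(P\calN_{4,n})\le\gdfin(P\calN_{4,n})\le \cdfin(\calN_4)+n\le \vcd(\calN_4)+3+n=\vcd(P\calN_{4,n})+3$ (using $\gdfin\le\max\{3,\cdfin\}=\cdfin$ from \eqref{eq:inequalities:dimensions}, since all quantities involved are $\ge 3$). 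Case (3), $g=5$, is identical with the bound $+3$ replaced by $+1$, using $\cdfin(\calN_5)\le 6=\vcd(\calN_5)+1$.

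The main obstacle I anticipate is not the homological algebra of extensions—that is by now standard—but rather pinning down the exact form of the Birman exact sequence in the non-orientable setting and identifying the low-genus/low-puncture sporadic exclusions where it may fail or where the kernel is not free (for instance issues with the mapping class group of a non-orientable surface of very small genus containing extra torsion or the kernel being a surface group rather than free). I would need to carefully cite or reprove the non-orientable Birman sequence (e.g. along the lines of Korkmaz's or Stukow's work) and verify that for all $g\ge 1$ and $n\ge 1$ the iterated kernels are genuinely free groups of cohomological dimension $1$; the statement's hypothesis $n\ge 1$ and the case split on $g$ are precisely calibrated to stay inside the good range, so the bookkeeping should close, but it requires care at the boundary of the sporadic cases.
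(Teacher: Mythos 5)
Your overall mechanism --- run the Birman exact sequence $n$ times, use subadditivity of $\gdfin$ over the extension together with the $\vcd$ lower bound, and feed in \cref{thm:main} resp.\ \cref{thm:exeptional:cases} at the bottom --- is exactly the paper's argument for $g=3$, $g\geq 6$, and $g=4,5$. But as written the proposal has genuine gaps, precisely at the sporadic boundary you flag but do not resolve. First, the low-genus cases: the Birman sequence in the form $1\to\pi_1(N_{g,n-1})\to P\calN_{g,n}\to P\calN_{g,n-1}\to 1$ is only available for $g\geq 3$ (and for $g=2$ only when $n\geq 2$). For $g=1$ your scheme breaks down entirely: the relevant kernel at the bottom step would be $\pi_1(\mathbb{RP}^2)=\dbZ/2$, which has torsion and infinite cohomological dimension, so no extension estimate of the type you invoke applies. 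The paper instead handles $g=1$ by a completely different device: the central extension $1\to\dbZ/2\to P_n(N_1)\to P\calN_{1,n}\to 1$ relating $P\calN_{1,n}$ to the pure braid group of $\mathbb{RP}^2$, the identification $\gdfin(P_n(N_1))=\gdfin(P\calN_{1,n})$ via fixed-point sets, and induction along the Fadell--Neuwirth sequence starting from the finite group $P_2(N_1)$. Similarly, for $g=2$, $n=1$ there is no Birman sequence to use, and even the naive bound through $\calN_2$ (finite) with kernel the Klein bottle group would give $\gdfin(P\calN_{2,1})\leq 2$, which is not sharp; the paper uses the explicit structure $\calN_{2,1}=(\dbZ\rtimes\dbZ/2)\times\dbZ/2$ to get $\gdfin=1=\vcd$. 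These are not bookkeeping issues: they require tools absent from your proposal.

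Second, a concrete numerical error in your assembly: you assert that each application of the Birman sequence contributes exactly $1$ to each dimension and that $\vcd(P\calN_{g,n})=\vcd(\calN_g)+n$. The first step ($n=1\to n=0$) has kernel $\pi_1(N_g)$, the fundamental group of a \emph{closed} non-orientable surface, which is torsion-free of cohomological dimension $2$, not a free group; correspondingly $\vcd(P\calN_{g,1})=2g-3=\vcd(\calN_g)+2$ and in general $\vcd(P\calN_{g,n})=2g+n-4=\vcd(\calN_g)+n+1$ for $g\geq 3$. The equalities in case (1) still close because the fiber contribution to the upper bound ($\gd=2$ at the first step, $\gd=1$ thereafter) matches the jump in $\vcd$ at each step, but your stated additivity ``$+n$'' does not, and the base case $n=1$ needs the orientable-to-non-orientable analogue of the relevant extension lemma for a fiber of dimension $2$ (the paper cites a non-orientable version of a lemma of Guaschi et al.\ for exactly this). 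Once these points are repaired --- separate arguments for $g=1$ and for $g=2$, $n=1$, and the corrected first step of the induction --- the rest of your outline coincides with the paper's proof.
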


Note that the previous theorem  deals with the pure mapping class group of $\Ngnb$ rather than the full mapping class group. See \cref{section:questions} for more details.

Finally, in the case we have at least one boundary component we have the following result.

\begin{theorem}\label{thm:vcd:gd:boundary}
If $b\geq 1$, then $\vcd (\calN_{g,n}^b)=\cdfin (\calN_{g,n}^b)= \gdfin (\calN_{g,n}^b)$.
\end{theorem}

The present paper is organized as follows. In \cref{section:preliminaries} we state \cref{thm:aramayona:martinezperez}, which is the criterion we will use in order to prove \cref{thm:main}. Also in \cref{section:preliminaries} we set up our main technical tools such as the Nielsen realization theorem for non-orientable surfaces, explicit computations of $\vcd(\calNgnb)$ and $\vcd(\Gammagnb)$, the Riemann-Hurwitz formula, and a computation of the $\vcd$ of the Weyl group of a finite subgroup $F$ of $\calN_g$ in terms of the $\vcd$ of certain mapping class groups. In \cref{section:sueful:results} we state and prove several inequalities that will be crucial in the proof of the main theorem. In Sections \ref{section:non-orientable:case} and \ref{section:orientable:case}, we verify the hypothesis of \cref{thm:aramayona:martinezperez} when the orbifold $N_g/F$ is non-orientable and orientable respectively. In \cref{section:proof:main:thm} we prove all the results stated in this introduction. Finally, in \cref{section:questions} we state some questions that arise naturally from the statements of our theorems.


\subsection*{Acknowledgements} The authors would like to thank Jesús Hernández Hernández for several helpful conversations. C.E.H. received support from PAPIIT-IN105318 and thanks to CCM-UNAM for the facilities provided in the preparation of this article. L.J.S.S was supported by PAPIIT-IA101221.


\section{Preliminaries}\label{section:preliminaries}

\subsection{Aramayona and Martínez-Pérez criterion}
Let $F$ be a finite subgroup of $G$. We denote $N_G(F)$, and $W_G(F)=N_G(F)/F$  the normalizer, and the Weyl group of $F$ respectively. If there is no risk of confusion we will omit the parenthesis and subindices, i.e. we will use the notation $NF$, and $WF$.

The \emph{length} $\lambda(F)$ of a finite group $F$ is the largest $i\geq 0$ for which  there is a sequence \[1=F_0<F_1 < \cdots < F_i=F.\]

The following theorem is a mild generalization of \cite[Theorem~3.3]{AMP14}, and the proof is exactly the same as in the Aramayona and Martínez-Pérez reference.

\begin{theorem}\label{thm:aramayona:martinezperez} Let $G$ be a virtually torsion free group and let $m\geq 0$. Assume that for any $F\leq G$ finite, $\vcd(W_G(F))+\lambda(F)\leq m$. Then $\cdfin(G)\leq m$. In particular, if $m=\vcd(G)$, then $\cdfin(G)=\vcd(G)$.
\end{theorem}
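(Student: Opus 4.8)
The plan is to reduce the bound on $\cdfin(G)$ to a statement about the Bredon cohomological dimension measured over a convenient class of subgroups, and then to control that by induction on the length $\lambda(F)$. Recall the standard fact (Lück--Meintrup, or \cite[Theorem~3.1]{AMP14}) that for a virtually torsion free group $G$ one has
\[
\cdfin(G)\;=\;\sup_{F\leq G\text{ finite}}\;\bigl(\cd\,W_G(F)+\text{(correction term)}\bigr),
\]
or, more precisely, a spectral sequence / Mayer--Vietoris type argument built from the poset of finite subgroups that expresses $H^*_\calF(G;M)$ in terms of the ordinary cohomology of the Weyl groups $W_G(F)$ with coefficients in derived functors of $M$. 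Since $W_G(F)$ is again virtually torsion free, $\cd\,W_G(F)$ can be replaced by $\vcd\,W_G(F)$ up to the usual discrepancy controlled by~(\ref{eq:inequalities:dimensions}).

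First I would set up the induction. For $F$ the trivial group, $W_G(F)=G$ and $\lambda(F)=0$, so the hypothesis gives $\vcd(G)\leq m$, which is the base of the estimate. For a general finite $F\leq G$, the Weyl group $W_G(F)$ acts on the poset of finite subgroups of $G$ strictly containing $F$; the stabilizer of such an $F'$ is (commensurable with) $W_G(F')$, and $\lambda(F')>\lambda(F)$. One then runs the long exact sequence relating the Bredon cohomology of $G$ over $\calF$ to that over the subfamily of subgroups containing no conjugate of a minimal nontrivial subgroup, the cohomology of the normalizers appearing in the "difference" term. Pushing this through, $H^n_\calF(G;M)$ is assembled from groups of the form $H^{n-\lambda(F)}(W_G(F);N)$ for suitable coefficient modules $N$, each of which vanishes for $n-\lambda(F)>\vcd\,W_G(F)$, i.e.\ for $n>\vcd\,W_G(F)+\lambda(F)$. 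Taking the supremum over all finite $F$ and invoking the hypothesis $\vcd\,W_G(F)+\lambda(F)\leq m$ yields $H^n_\calF(G;M)=0$ for all $n>m$ and all $M$, hence $\cdfin(G)\leq m$.

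For the final sentence, suppose $m=\vcd(G)$. The hypothesis applied to $F=1$ already forces $m\geq\vcd(G)$, so in fact $m=\vcd(G)$ is the smallest value for which the hypothesis can hold; combined with the just-proved inequality $\cdfin(G)\leq m=\vcd(G)$ and the universal inequality $\vcd(G)\leq\cdfin(G)$ from~(\ref{eq:inequalities:dimensions}), we conclude $\cdfin(G)=\vcd(G)$.

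The main obstacle is bookkeeping the coefficient systems in the induction: one must check that the derived-functor coefficients appearing in the "difference" terms of the long exact sequence are themselves of the form to which the estimate on $\vcd\,W_G(F)$ applies, and that conjugacy of finite subgroups does not spoil the length count $\lambda(F')>\lambda(F)$. Since, as the authors note, this is verbatim the argument of \cite[Theorem~3.3]{AMP14} with $F$ ranging over all finite subgroups rather than a restricted collection, I would simply cite that proof for the details after indicating the two points above where the generalization enters.
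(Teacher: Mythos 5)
Your proposal follows essentially the same route as the paper, which proves this statement simply by observing that the argument of \cite[Theorem~3.3]{AMP14} --- the filtration of the family of finite subgroups by the length $\lambda$, with the successive differences controlled by the normalizers and hence by the Weyl groups $W_G(F)$ --- goes through verbatim with the bound $m$ in place of $\vcd(G)$; your sketch of that argument, and your reduction of the ``in particular'' clause via the hypothesis at $F=1$ together with \eqref{eq:inequalities:dimensions}, are both correct. The only imprecision is your opening display, which holds in general only as an upper bound $\cdfin(G)\leq\sup_F\bigl(\vcd(W_G(F))+\lambda(F)\bigr)$ rather than an equality, but since your argument uses only that direction this is harmless.
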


The proof of our main theorem will be based on verifying the hypothesis of \cref{thm:aramayona:martinezperez} when $G=\calN_g$.

\subsection{Nielsen realization theorem}

    An important result in the study of mapping class groups is the Nielsen realization theorem, it responds affirmative to the question of whether a finite group of the mapping class grouop  of a surface arises as a group of isometries of some hyperbolic structure. In the literature, the theorem is usually  enunciated for orientable surfaces (see \cite{KS83} and  \cite[Theorem~7.2]{FM12}  ) but, of course is also valid for non-orientable ones (see  \cite[Remark~on~p.~256]{KS83}). In this work we need the version for non-orientable surfaces  and  for the sake of completeness we state it here. Denote the Euler characteristic of the surface $N$ by $\chi(N)$.
    
    \begin{theorem}\label{thm:Nielsen:realization}
    Let $N = N_{g,n}$ non-orientable and suppose $\chi(N) < 0$. Suppose $F \leq \calNgn{g}{n}$ is a finite group. Then there exists a finite group $\widetilde{F}\leq Homeo (N)$ so that the natural projection
    $$Homeo(N) \longrightarrow \calNgn{g}{n} $$
    restricts to an ismorphism $$\widetilde{F} \longrightarrow F.$$
    Further, $\widetilde{F}$ can be chosen to be a subgroup of isometries of some hyperbolic metric of $N$.
    \end{theorem}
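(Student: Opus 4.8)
The plan is to deduce the theorem from the orientable Nielsen realization theorem \cite{KS83,FM12} by passing to the orientation double cover $p\colon \widetilde N\to N$. Here $\widetilde N\cong S_{g-1,2n}$ is an orientable surface with $\chi(\widetilde N)=2\chi(N)<0$, to which the orientable case applies, and the deck group of $p$ is generated by a fixed-point-free orientation-reversing involution $\sigma$. The first step is to lift $F$ to the double cover. Every self-homeomorphism of $N$ fixing the puncture set preserves the orientation character in $H^1(N;\dbZ/2\dbZ)$, hence lifts to $\widetilde N$; its two lifts differ by $\sigma$, and each commutes with $\sigma$ (otherwise $\sigma$ would be isotopic to $\mathrm{id}_{\widetilde N}$, which is impossible since $\chi(\widetilde N)<0$). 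This produces a homomorphism from $\calNgn{g}{n}$ to $\mcg^{\pm}(\widetilde N)/\langle[\sigma]\rangle$, where $\mcg^{\pm}(\widetilde N)$ denotes the group of isotopy classes of all self-homeomorphisms of $\widetilde N$. I would let $L\le\mcg^{\pm}(\widetilde N)$ be the subgroup of lifts of elements of $\calNgn{g}{n}$, which fits into a central extension $1\to\langle[\sigma]\rangle\to L\to\calNgn{g}{n}\to 1$, and take $\widehat F$ to be the preimage of $F$. Then $\widehat F$ is a finite subgroup of $\mcg^{\pm}(\widetilde N)$ containing $\langle[\sigma]\rangle$ centrally with $\widehat F/\langle[\sigma]\rangle\cong F$.

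Next I would apply the orientable Nielsen realization theorem to $\widehat F$: it yields a complete hyperbolic metric $\rho$ on $\widetilde N$ and an injective homomorphism $s\colon\widehat F\to\mathrm{Isom}(\widetilde N,\rho)$ splitting the natural map $\mathrm{Isom}(\widetilde N,\rho)\to\mcg^{\pm}(\widetilde N)$. The isometry $\widetilde\sigma:=s([\sigma])$ is an involution isotopic to $\sigma$; since isotopic finite-order homeomorphisms of a surface with $\chi<0$ are topologically conjugate, $\widetilde\sigma$ is fixed-point-free. As $\langle\widetilde\sigma\rangle$ is central in $s(\widehat F)$, the group $s(\widehat F)$ descends to a group of isometries of the hyperbolic surface $(\widetilde N,\rho)/\widetilde\sigma$ acting through the quotient $s(\widehat F)/\langle\widetilde\sigma\rangle\cong F$, and this action is faithful because an isometry of $\widetilde N$ descending to the identity downstairs is a deck transformation. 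Picking $h\in\mathrm{Homeo}(\widetilde N)$ with $h\sigma h^{-1}=\widetilde\sigma$ and passing to quotients gives a homeomorphism $N=\widetilde N/\sigma\xrightarrow{\cong}(\widetilde N,\rho)/\widetilde\sigma$; transporting the hyperbolic metric and the isometric action along it produces a hyperbolic metric on $N$ together with an embedding $F\hookrightarrow\mathrm{Isom}(N)\le Homeo(N)$. Finally I would run a diagram chase through this construction to confirm that the composite $F\hookrightarrow Homeo(N)\to\calNgn{g}{n}$ is the original inclusion, so that its image is the required subgroup $\widetilde F$.

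All of the geometric input is black-boxed in the orientable Nielsen realization theorem, so the remaining work is entirely in the two reductions. I expect the main obstacle to be this bookkeeping: verifying that the lifting map to $\mcg^{\pm}(\widetilde N)/\langle[\sigma]\rangle$ is well defined with the stated kernel, that $\widetilde\sigma$ is genuinely fixed-point-free so that $(\widetilde N,\rho)/\widetilde\sigma$ is a surface homeomorphic to $N$ rather than an orbifold with cone or mirror points, and that the descended isometric $F$-action induces exactly $F\le\calNgn{g}{n}$ on isotopy classes. The punctures cause no extra difficulty, since $p$ sends punctures to punctures and every map involved respects the puncture sets; the hypothesis $\chi(N)<0$ is used precisely to guarantee $\chi(\widetilde N)<0$ (so the orientable theorem applies) and to rule out $\sigma$ and $\widetilde\sigma$ being isotopically trivial.
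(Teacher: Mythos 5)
The paper does not actually prove this statement: it records the theorem for completeness and refers to the remark on p.~256 of Kerckhoff's paper (together with the orientable case in Farb--Margalit) for its validity, so there is no in-paper argument to compare yours against. Your double-cover reduction is the standard deduction that lies behind that remark, and as outlined it is essentially correct: the orientation double cover is characteristic, so every mapping class of $N$ lifts; the lifts commute with the deck involution $\sigma$ on the nose, giving the central extension $1\to\langle[\sigma]\rangle\to L\to\calNgn{g}{n}\to 1$; and realizing the finite preimage $\widehat F$ upstairs and descending recovers the desired $\widetilde F$. Two points in your outline are genuine external inputs rather than bookkeeping, and you should cite them explicitly. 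First, since $\widehat F$ contains the orientation-reversing class $[\sigma]$, you must invoke Nielsen realization for the \emph{extended} mapping class group $\mcg^{\pm}(\widetilde N)$, not just for $\mcg(\widetilde N)$; this is precisely the content of Kerckhoff's remark that the paper cites, so your argument is not fully independent of it (though the extended case does follow from Kerckhoff's methods). Second, both the well-definedness of the projection $L\to\calNgn{g}{n}$ (isotopic fiber-preserving homeomorphisms have isotopic projections) and the final diagram chase rest on a Birman--Hilden-type statement for the cover $\widetilde N\to N$; the paper itself quotes exactly this from Zieschang in the proof of its Lemma~2.5, so you can borrow that reference. Relatedly, for the fixed-point-freeness of $\widetilde\sigma$ and for transporting the quotient metric back to $N$ by a map inducing the identity on mapping classes, you want the strong form of rigidity: isotopic finite-order homeomorphisms of a hyperbolic surface are conjugate by a homeomorphism isotopic to the identity. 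With those three citations supplied, your proof is complete and supplies an argument the paper leaves implicit.
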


\subsection{Virtual cohomological dimension of mapping class groups} 
In \cite{Ha86} Harer computed the virtual cohomological dimension of an orientable surface with punctures and boundary components. In \cite[Theorem~6.9]{I87} Ivanov computed the virtual cohomological dimension of the mapping class group of a non-orientable surface $N_g$ of genus $g$ and $n$ marked points. We borrowed the following formulas from Ivanov's paper 

\[
\vcd(\Gammagn) =
\begin{cases}
0 & \text{if } g=0 \text{ and } n\leq 3 \\
n-3 & \text{if } g=0 \text{ and } n\geq 3 \\
1 & \text{if } g=1 \text{ and } n= 0 \\
n & \text{if } g=1 \text{ and } n\geq 1 \\
4g-5 & \text{if } g\geq 2 \text{ and } n=0\\
4g+n-4 & \text{if } g\geq 2 \text{ and } n\geq 1
\end{cases}
\]

\[
\vcd(\calNgn{g}{n}) =
\begin{cases}
0 & \text{if } g=1 \text{ and } n\leq 2 \\
n-2 & \text{if } g=1 \text{ and } n\geq 3 \\
n & \text{if } g=2 \\
2g-5 & \text{if } g\geq 3 \text{ and } n=0\\
2g+n-4 & \text{if } g\geq 3 \text{ and } n\geq 1
\end{cases}
\]

For our purposes we will need a concrete formula for the virtual cohomological dimension of a non-orientable surface with boundary components. Since we lack of a reference for this, we will compute it in \cref{prop:vcd:mpcg}. Before proving \cref{prop:vcd:mpcg} we need the following lemma.

\begin{lemma}\label{thm:main:with:boundaries}
Let $b\geq 1$. Then the group $\calNgnb$ is torsion-free.  
\end{lemma}
\begin{proof}
First assume  $\chi(N_{g,n}^b) < 0$, or equivalently $g-2+n+b \geq 1$. Let $S = S_{g-1,2n}^{2b}$ 
be the oriented double cover of $N_{g,n}^{b}$ and  $\tau:S \rightarrow S$ be the covering involution.

Let $[f]\in \calNgnb$  such that $[f]^{m}=[id]$. The homeomorphism $f$ can be lifted to a  orientation preserving homeomorphism $\widetilde{f}$ of $S$. Notice that $\widetilde{f}$ fix the boundary components of $S$ pointwise, even more $\widetilde{f}^{m}\simeq id$. As $\Gamma_{g-1,2n}^{2b}$ is torsion free (see \cite[Corollary~7.3]{FM12}) we have that $[\widetilde{f}] = [id]$. As $\widetilde{f}$ is $\tau$-equivariant  and isotopic to the identity map, then it is $\tau$-equivariant isotopic to the identity map (see \cite{Zi73}), in consequence $f\simeq id$ in $N_{g,n}^{b}$.

If $\chi(N_{g,n}^b)\geq 0$, as $g\geq 1$ and $b\geq 1$, we necessarily have $g=1$, $b=1$, $n=0$, that is $N_{g,n}^b$ is  the Möbius band, but in this case the mapping class group is trivial (see \cite{EP1966}).

\end{proof}

\begin{proposition}\label{prop:vcd:mpcg} 
\[
\vcd(\Gammagnb) =
\begin{cases}
b & \text{if } g=0 \text{ and } n+b\leq 3 \\
n+2b-3 & \text{if } g=0 \text{ and } n+b\geq 3 \\
1+b & \text{if } g=1 \text{ and } n+b= 0 \\
n+2b & \text{if } g=1 \text{ and } n+b\geq 1 \\
4g-5 & \text{if } g\geq 2 \text{ and } n+b=0\\
4g+n+2b-4 & \text{if } g\geq 2 \text{ and } n+b\geq 1
\end{cases}
\]

\[
\vcd(\calNgn[b]{g}{n}) =
\begin{cases}
b & \text{if } g=1 \text{ and } n+b\leq 2 \\
n+2b-2 & \text{if } g=1 \text{ and } n+b\geq 3 \\
n+2b & \text{if } g=2 \\
2g-5 & \text{if } g\geq 3 \text{ and } n+b=0\\
2g+n+2b-4 & \text{if } g\geq 3 \text{ and } n+b\geq 1
\end{cases}
\]
\end{proposition}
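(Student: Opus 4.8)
The plan is to derive these formulas from the already–quoted computations of Ivanov and Harer for surfaces without boundary by the standard device of \emph{capping off boundary components with punctured discs}. Concretely, if $N$ is a non-orientable surface of genus $g$ with $n$ punctures and $b\geq 1$ boundary components, let $\widehat N$ be the surface obtained by gluing a once-punctured disc to each boundary circle; then $\widehat N = N_{g,n+b}$ (the genus is unchanged, the number of punctures becomes $n+b$, and there are no boundary components). There is the classical Birman-type exact sequence relating $\mcg$ of a surface with boundary to the $\mcg$ of the capped surface: the kernel of the capping homomorphism $\calNgn[b]{g}{n}\to \calNgn{g}{n+b}$ (where we must also remember we are no longer requiring the new punctures to be fixed, so one passes through the finite-index pure subgroup) is generated by the Dehn twists about the $b$ boundary curves, i.e.\ it is a free abelian group $\dbZ^{b}$. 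The same statement holds in the orientable case. Since $\vcd$ is invariant under passing to finite-index subgroups and behaves additively in extensions $1\to \dbZ^b\to \calNgn[b]{g}{n}\to Q\to 1$ with $Q$ of finite index in $\calNgn{g}{n+b}$ — here one uses $\vcd(\dbZ^b)=b$ together with the standard inequality $\vcd(E)\le \vcd(N)+\vcd(Q)$ for an extension $1\to N\to E\to Q\to 1$, and the reverse inequality coming from the spectral sequence / the fact that a torsion-free finite-index subgroup of $E$ is an extension of a torsion-free subgroup of $Q$ by $\dbZ^b$ which is a duality group — one concludes
\[
\vcd(\calNgn[b]{g}{n}) = \vcd(\calNgn{g}{n+b}) + b,
\]
and similarly $\vcd(\Gammagnb)=\vcd(\Gamma_{g,n+b})+b$.

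Granting this, the proof is then just bookkeeping: substitute $n\rightsquigarrow n+b$ into Ivanov's and Harer's formulas quoted above and add $b$. For the orientable case: when $g\geq 2$ and $n+b\geq 1$ we get $(4g+(n+b)-4)+b = 4g+n+2b-4$; when $g=1$ and $n+b\geq 1$ we get $(n+b)+b = n+2b$; when $g=0$ and $n+b\geq 3$ we get $((n+b)-3)+b = n+2b-3$; the small cases $g=0,\ n+b\le 3$ give $\vcd(\Gamma_{0,n+b})+b = 0+b = b$ and $g=1,\ n+b=0$ is vacuous when $b\ge 1$ but recorded as $1+b$ for the case $b=0$. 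The non-orientable case is identical: $g\geq 3,\ n+b\geq 1$ yields $(2g+(n+b)-4)+b = 2g+n+2b-4$; $g=2$ yields $(n+b)+b = n+2b$; $g=1,\ n+b\leq 2$ yields $0+b=b$ and $g=1,\ n+b\geq 3$ yields $((n+b)-2)+b = n+2b-2$. Note that the "genus $g\geq 2$ [resp.\ $g\geq 3$], $n+b=0$" line of each table simply restates the boundaryless formula $4g-5$ [resp.\ $2g-5$] and requires no argument, and one should observe the small inconsistency that the case $g=0$, $n+b\le 3$ overlaps the next case at $n+b=3$, both giving $b=n+2b-3$ only when $n+b=3$, so consistency holds.

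The main obstacle is making the capping exact sequence rigorous in the non-orientable setting. In the orientable case this is textbook (\cite[\S3.6.2, \S4.2.5]{FM12}), but references for non-orientable surfaces are thinner, so I would either cite the appropriate non-orientable analogue (the inclusion-induced sequences for $\calN$ are developed, e.g., in work of Korkmaz, Stukow, and Gonçalves–Guaschi–Maldonado) or argue directly: the inclusion $N\hookrightarrow \widehat N$ induces a homomorphism whose kernel is carried by the boundary, and one checks that the $b$ boundary Dehn twists are independent of infinite order and generate this kernel, using that a disc with one puncture has mapping class group $\dbZ$ generated by the twist about its boundary and that the Alexander method detects the kernel. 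A secondary, purely formal point to be careful about: the capping map lands not in $\calNgn{g}{n+b}$ but in the subgroup where the $b$ new punctures are permuted among themselves (or, if we cap with unpunctured discs to forget them, in a different surface); either way one works with a finite-index subgroup, which is harmless for $\vcd$. I would present \cref{prop:vcd:mpcg} by first isolating the extension lemma ($\vcd$ is additive across extensions by $\dbZ^b$ when the quotient is virtually a duality group of the relevant flavour, or more simply: mapping class groups are virtually duality groups by Harer/Ivanov, so finite-index torsion-free subgroups are duality groups, and an extension of a duality group by $\dbZ^b$ is a duality group of dimension the sum), then the capping identification, and finally the case-by-case substitution, which I would state but not belabor.
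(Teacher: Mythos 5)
Your proposal is correct and follows essentially the same route as the paper: the capping exact sequence with kernel $\dbZ^b$ generated by boundary Dehn twists (which is exactly the sequence the paper quotes from Stukow, stated at the level of finite-index pure subgroups) combined with the fact that the quotient is virtually a duality group to get $\vcd(\calNgn[b]{g}{n})=\vcd(\calNgn{g}{n+b})+b$, followed by substitution into Ivanov's and Harer's formulas. The only difference is presentational: the paper simply cites Stukow for the non-orientable capping sequence and Ivanov for the duality-group additivity, rather than sketching the Alexander-method argument you outline.
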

\begin{proof}  Suppose $b>0$.  
The following short exact sequence, which is analogous to that in \cite[Proposition~3.19]{FM12}, appears in \cite[p.~262]{St10}
\begin{equation}\label{sec-mcg}
    1\to \dbZ^b\to P^k\calNgnb \to P^{k+b}\calNgn{g}{n+b} \to 1
\end{equation}
where the second and third term of the sequence  are  finite index subgroups of the pure mapping class groups $P\calNgnb$  and $P\calNgn{g}{n+b}$, respectively.

On the other hand by \cite[Theorem~6.9]{I87} we know that  $\calNgn{g}{n+b}$ is a virtual duality group in the sense of Bieri and Eckman. Therefore $P^{k+b}\calNgn{g}{n+b}$ and $\dbZ^b$ are both virtual duality groups of dimension  $\vcd(\calNgn{g}{n+b})$ and $b$ respectively. Note that $\calNgnb$ is torsion-free, by \cref{thm:main:with:boundaries}, and $\calNgn{g}{n+b}$ is virtually-torsion free.  We can choose a finite index torsion free duality subgroup $G$ of $ P^{k+b} \calNgn{g}{n+b}$ and from (\ref{sec-mcg}) we get a short exact sequence $1\to \mathbb{Z}^b \to H \to G\to 1$, $H$ is finite index subgroup of $P
^{k}\calNgnb$. By \cite[(ii)~on~page~88]{I87}, $H$ is a duality group of dimension $b+\vcd(\calNgn{g}{n+b})$. Therefore $\calNgnb$ is a virtual duality group of dimension $b+\vcd(\calNgn{g}{n+b})$. This implies that $\vcd(\calNgnb)=b+\vcd(\calNgn{g}{n+b})$. Now the result follows from Ivanov's computations in the empty-boundary case.

The proof for the orientable case is analogous (see also \cite[Theorem~4.1]{Ha86}).  
\end{proof}

\subsection{$2$-dimensional orbifolds and the Riemann-Hurwitz formula}
All the content in this section is standard material and can be found in \cite{Th97}.

Recall that the singular locus  of a 2-dimensional (closed) orbifold has one of the following three local models:

\begin{enumerate}
    \item The mirror: $\dbR^2/(\dbZ/2)$, where $\dbZ/2$ acts by reflection on one of the axis.
    \item Elliptic points of order $n$: $\dbR^2/(\dbZ/n)$, where $\dbZ/n$ acts by rotations.
    \item Corner reflectors  of order $n$: $\dbR^2/D_n$, where $D_n$ is the dihedral group of order $2n$, that  is generated by reflections about two lines that meet at an angle of $\pi/n$.
\end{enumerate}

Let $O$ be a 2-dimensional orbifold with underlying topological space $X_O$ with $k$ corner reflectors of orders $p_1,\dots, p_k$ and $l$ elliptic points of orders $q_1,\dots , q_l$.  The Riemann-Hurwitz formula for the (orbifold) Euler characteristic is the following
\[
\chi(O)=\chi(X_O)-\frac{1}{2}\sum_{i=1}^{k}\left(1-\frac{1}{p_i}\right)-\sum_{i=1}^{l}\left(1-\frac{1}{q_i}\right).
\]

All of the orbifolds appearing in this paper arise in the following form. Let $F$ be a finite group acting on (a possibly non-orientable surface)  $S=\Sgnb$, then the quotient space $O_F$ is an orbifold with underlying topological space $S_F$. Since $F$ may have elements acting as non-orientation preserving homeomorphisms, $S_F$ may be orientable or non-orientable (see for instance \cite[Corollary~3.2]{GS16} and \cite{Conder15}), and may have elliptic points, mirror points and corner reflectors. In this situation we will use the following notation:

\begin{itemize}
    \item $g_F$ is the genus of $S_F$. 
    \item $e_F$ is the number of elliptic points of $O_F$ and the orders will be denoted $q_1,\dots ,q_{e_F}$.
    \item $c_F$ is the number of corner points of $O_F$ and the orders will be denoted $p_1,\dots ,p_{c_F}$.
    \item $b_m$ is the number of boundary components of $S_F$ that do not contain any corner point. In other words, all points in such boundary components are mirror points.
    \item $b_c$ is the number of boundary components of $S_F$ that contain at least one corner point.
    \item $b=b_m+b_c$ is the number boundary components of $S_F$.
    \item $E_F=\sum_{i=1}^{e_F}\left(1-\frac{1}{q_i}\right)$ and $C_F=\sum_{i=1}^{c_F}\left(1-\frac{1}{p_i}\right)$. So that the we can rewrite the Euler characteristic of $O_F$ as $\chi(O_F)=\chi(S_F)-C_F/2 - E_F$.
    \item If there is no risk of confusion we will not distinguish between an orbifold and its underlying topological space.
    
\end{itemize}

The orbifold Euler characteristic is defined in such a way that satisfies the following multiplicativity property, which is the so-called Riemann-Hurwitz formula:

\begin{equation}\label{eq:riemann:hurwitz}
|F|\chi(O_F)=\chi(S).    
\end{equation}

The following inequalities are clear and they will be useful latter:

\begin{equation}\label{eq:inequalities:eF:cF}
    \frac{e_F}{2}\leq E_F\leq e_F\quad \text{ and }\quad \frac{c_F}{2}\leq C_F\leq c_F.
\end{equation}


\subsection{Weyl groups of finite groups in the non-orientable mapping class groups}  In this section we prove \cref{lemma:maher}, that is a mild generalization of \cite[Proposition~2.3]{Ma11}. The proof is also an adaptation of Maher's argument, still we include it here for the sake of completeness. Later on we prove \cref{thm:vcd:weyl:groups}, this result provides a way to compute the $\vcd$ of Weyl groups of finite subgroups of $\calN_g$ in terms of the $\vcd$ of other mapping class groups. \cref{thm:vcd:weyl:groups} is one of the main tools in the proof of \cref{thm:main} and \cref{thm:exeptional:cases}.

Let $F$ be a finite subgroup of the mapping class group $\calN_g$ of $N_g$ with $g\geq 3$. By  \Cref{thm:Nielsen:realization}  there exists a hyperbolic metric on $N$ such that $F$ is isormorphic to a finite group of isometries with respect to that metric. Denote by $O_F$ the quotient orbifold. Define $\Gamma^*_F$ as the group of isotopy classes of self-homeomorphisms of $O_F$ that send elliptic points of order $q$ to elliptic points of order $q$, mirror points to mirror points and corner reflectors of order $p$ to corner reflectors of order $p$.

\begin{lemma}\label{lemma:maher}
Let $WF$ be the Weyl group of $F$ in $\calN_g$. Then, there exists an injective homomorphism $WF\to \Gamma^*_F$ such that the image is a finite index subgroup of $\Gamma^*_F$.
\end{lemma}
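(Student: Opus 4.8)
The plan is to adapt Maher's argument \cite[Proposition~2.3]{Ma11} to the non-orientable, bordered-orbifold setting. Fix, via \Cref{thm:Nielsen:realization}, a hyperbolic metric on $N_g$ for which $F$ acts by isometries, and let $O_F=N_g/F$ be the quotient orbifold with its quotient (possibly non-orientable, possibly bordered) hyperbolic structure. The starting point is the classical fact that the covering $N_g\to O_F$ is characterized, up to equivalence, by the conjugacy class of $F$ inside the orbifold fundamental group (equivalently, inside $\Out$ of the deck data); more usefully for us, a self-homeomorphism of $N_g$ that normalizes the $F$-action descends to a self-homeomorphism of $O_F$, and conversely a self-homeomorphism of $O_F$ lifts to one of $N_g$ normalizing $F$ because $N_g\to O_F$ is a characteristic cover (the universal cover of the orbifold is the same as that of the surface when $\chi<0$, and $F$ is the full deck group). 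A homeomorphism of $O_F$ in the orbifold sense must preserve the singular locus type-by-type, i.e. it sends elliptic points of order $q$ to elliptic points of order $q$, mirror boundary arcs to mirror boundary arcs, and corner reflectors of order $p$ to corner reflectors of order $p$; hence it represents an element of $\Gamma^*_F$.

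\textbf{Construction of the map.} First I would make precise the descent/lift correspondence at the level of homeomorphism groups: let $N_{\mathrm{Homeo}}(F)\leq \mathrm{Homeo}(N_g)$ be the normalizer of the realized copy of $F$. Each $\phi\in N_{\mathrm{Homeo}}(F)$ induces a well-defined $\overline{\phi}\in\mathrm{Homeo}(O_F)$, and since $\phi$ conjugates the isometric $F$-action to itself it permutes the singular points preserving orders, so $\overline{\phi}$ lies in the subgroup of $\mathrm{Homeo}(O_F)$ representing $\Gamma^*_F$. This gives a homomorphism $N_{\mathrm{Homeo}}(F)\to\Gamma^*_F$. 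Passing to isotopy classes, $N_{\mathrm{Homeo}}(F)/F$ surjects onto $N_{\calN_g}(F)/F=WF$ by Nielsen realization (any mapping class normalizing $F$ has a representative normalizing the isometric $F$; cf. the argument in \cite{Ma11} that uniqueness of the equivariant hyperbolic structure up to isotopy lets one isotope a normalizing mapping class to a normalizing homeomorphism), and one checks this descends to a homomorphism $WF\to\Gamma^*_F$. The target here should be taken as isotopy classes of orbifold homeomorphisms, so one also needs that isotopic $\phi,\phi'\in N_{\mathrm{Homeo}}(F)$ have isotopic descents — this follows because an $F$-equivariant isotopy descends, and any isotopy upstairs can be averaged or lifted to an equivariant one (again using that $N_g\to O_F$ is the orbifold universal-cover quotient and $\chi(O_F)<0$).

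\textbf{Injectivity.} For injectivity of $WF\to\Gamma^*_F$: suppose $\phi\in N_{\mathrm{Homeo}}(F)$ descends to something isotopic to the identity on $O_F$. Lift the orbifold isotopy to an $F$-equivariant isotopy on $N_g$ from $\phi$ to an element of $F$ (the lift of the identity is a deck transformation). Hence $[\phi]\in F$ in $\calN_g$, so $[\phi]$ is trivial in $WF$. The one subtlety is the lifting of isotopies through singular points, which is handled by the standard local models in \cite{Th97} together with the fact that $F$ acts by isometries (so fixed points and branch data are rigid).

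\textbf{Finite index.} For the finite-index claim: let $\Gamma^0_F\leq\Gamma^*_F$ be the finite-index subgroup of classes that additionally fix each singular point individually and each boundary component setwise with a chosen orientation of its complement of corners — concretely $\Gamma^*_F/\Gamma^0_F$ embeds in the finite group of permutations of the (finitely many) singular points, corner reflectors, and boundary components compatible with the order/type data. It then suffices to show that $\Gamma^0_F$ (or a further finite-index subgroup of it) lies in the image of $WF$. Given $\psi\in\Gamma^0_F$, realize it by an orbifold homeomorphism of $O_F$; lift to an $F$-equivariant homeomorphism $\widetilde\psi$ of $N_g$ — the obstruction to lifting a given $\psi$ lies in a finite set (it is governed by the induced action on $\pi_1^{\mathrm{orb}}(O_F)/F \cong \pi_1(N_g)$ modulo the choice of lift, a finite ambiguity once $\psi$ fixes the singular data), so a finite-index subgroup of $\Gamma^0_F$ does lift. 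Such $\widetilde\psi$ normalizes $F$, hence defines an element of $WF$ mapping to $\psi$. Therefore the image of $WF$ has finite index in $\Gamma^*_F$.

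\textbf{Main obstacle.} I expect the genuinely delicate step to be the finite-index argument, specifically controlling which orbifold homeomorphisms lift to $F$-equivariant homeomorphisms of $N_g$: one must show the lifting obstruction takes finitely many values, which amounts to analyzing the short exact sequence $1\to F\to \pi_1^{\mathrm{orb}}(O_F)\to \pi_1^{\mathrm{orb}}(O_F)/(\text{deck of }N_g)$... — more precisely the extension $1\to\pi_1(N_g)\to\pi_1^{\mathrm{orb}}(O_F)\to F\to 1$ — and checking that the mapping-class action on the base that preserves all marked/singular data preserves this extension class up to a finite-index subgroup. The non-orientability of $O_F$ and the presence of corner reflectors and mirror boundary make the bookkeeping heavier than in Maher's orientable case, but no new idea beyond the orientable argument is needed; all the local structure is the standard one recalled from \cite{Th97}, and Nielsen realization (\Cref{thm:Nielsen:realization}) supplies the rigidity that makes descent and lifting work.
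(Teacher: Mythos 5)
Your overall strategy is the paper's: both are adaptations of Maher's argument, descending the normalizer of the realized $F$ to orbifold homeomorphisms of $O_F$ and identifying the image inside $\Gamma^*_F$. The construction, well-definedness and injectivity steps match the paper's in substance (the paper routes these through Zieschang's results on fiber-preserving homeomorphisms and the identification of the fiber-preserving subgroup $K\leq\calN_g$ with $N_{\calN_g}(F)$, where you invoke Nielsen realization and equivariant rigidity; the content is the same).

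The gap is exactly where you flag it: the finite-index claim. You assert that the obstruction to lifting $\psi$ ``lies in a finite set'' but never supply the finiteness mechanism, and the parenthetical justification is off: $\pi_1^{\mathrm{orb}}(O_F)/F$ is not $\pi_1(N_g)$ --- rather $\pi_1(N_g)\cong\ker(\theta)$, where $\theta\colon\pi_1^{\mathrm{orb}}(O_F)\to F$ is the monodromy of the normal covering $N_g\to O_F$, so $F$ is a quotient of $\pi_1^{\mathrm{orb}}(O_F)$, not a normal subgroup one can quotient by. The paper's resolution is short and closes your argument: $\psi\in\Gamma^*_F$ lifts to $N_g$ if and only if $\psi_*$ preserves the finite-index subgroup $\ker(\theta)\leq\pi_1^{\mathrm{orb}}(O_F)$; since $\pi_1^{\mathrm{orb}}(O_F)$ is finitely presented, it has only finitely many subgroups of index $l=[\pi_1^{\mathrm{orb}}(O_F):\ker(\theta)]$; the group $\Gamma^*_F$ permutes this finite set, and the kernel of that action is a finite-index subgroup of $\Gamma^*_F$ consisting of liftable classes, hence contained in the image of $WF$. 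With this observation no analysis of the extension class, and no preliminary passage to your subgroup $\Gamma^0_F$ fixing the singular data, is needed.
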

\begin{proof}
Let $K$ be the subgroup of $\calN_g$ of elements that admit a representative $N_g\to N_g$ that preserves the fibers of the projection $N_g\to O_F$. In particular $F\leq K$. By \cite[last~paragraph~in~p.~20]{Zi73}, every two isotopic fiber preserving self-homeomorphisms of $N$ are isotopic via a fiber preserving isotopy. Hence we have a well-defined map $\varphi\colon K\to \Gamma^*_F$ with kernel $F$.

On the other hand, by definition of  $\Gamma^*_F$ all of its elements are represented by an orbifold map $O_F\to O_F$. Hence we have an action of $\Gamma^*_F$ on the orbifold fundamental group $\pi_1^{\mathrm{orb}}(O_F)$ by automorphisms. For more details on the orbifold fundamental group see \cite[III.$\mathcal{G}$.3]{BH99}.

Let us characterize the elements in $\varphi(K)$. A homeomorphism $g\colon O_F\to O_F$ is covered by a map $\tilde g\colon N_g\to N_g$ if and only if the induced map  $g_*\colon \pi_1^{\mathrm{orb}}(O_F)\to \pi_1^{\mathrm{orb}}(O_F)$ restricts to an automorphism of $\ker(\theta)$, where $\theta\colon \pi_1^{\mathrm{orb}}(O_F)\to F$ is the map given by the \emph{normal} covering $N\to O_F$. Therefore the elements of $\varphi(K)$ are those elements of $\Gamma^*_F$ represented by maps $g:O_F \to O_F$ such that $g_*$ leave $\ker(\theta)$ invariant.

Note that $\ker(\theta)$ is a finite index subgroup of $\pi_1^{\mathrm{orb}}(O_F)$, say $l$. Since $\pi_1^{\mathrm{orb}}(O_F)$ is a finitely presented group, it has finitely many subgroups of  index $l$. Let $X$ be the finite set of subgroups of $\pi_1^{\mathrm{orb}}(O_F)$ of index $l$. Hence $\Gamma^*_F$ acts on $X$ and the kernel of this action is contained in $\varphi(K)$. Since this kernel is clearly a finite index subgroup of $\Gamma^*_F$, we conclude that $\varphi(K)$ is a finite index subgroup of $\Gamma^*_F$.

Once we prove that $K$ is equal to the normalizer of $F$ in $\calN_g$, we will finish our proof. This was proved by Zieschang in Corollary~8.7 and the last paragraph on page 20 of \cite{Zi73}.

\end{proof}


Before setting the next result notice that in the definition on mapping class group with punctures, we can think of the punctures as boundaries that are not fixed pointwise by homeomorphisms and  where  homotopies can move the points of these boundaries.  With this in mind we  think of the underlying topological surface $S_{F}$  of the orbifold  $O_{F}$ as a surface with $b_{c}$ boundary components, the boundaries of $S_{F}$ that have at least one corner point, and $e_{F} + b_{m}$ punctures where $e_{F}$ punctures come from the elliptic points of $O_{F}$ and the rest $b_{m}$ punctures are the boundary components that do not contain any corner point.

\begin{theorem}\label{thm:vcd:weyl:groups}
Let $F$ be a finite subgroup of $\calN_g$, then the Weyl group $WF$ of $F$, is commensurable with $\calNgn[b_c]{g_F}{b_m+e_F}$ if $O_F$ is non-orientable, or with $\calSgn[b_c]{g_F}{b_m+e_F}$ if $O_F$ is orientable.

In particular \[\vcd(WF)=\begin{cases}
\vcd(\calNgn[b_c]{g_F}{b_m+e_F}) & \text{if $O_F$ is a non-orientable surface,} \\
\vcd(\calSgn[b_c]{g_F}{b_m+e_F}) & \text{if $O_F$ is an orientable surface.}
\end{cases}\] 
\end{theorem}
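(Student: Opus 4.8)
The plan is to combine the preceding lemma with the classical identification of mapping class groups of orbifolds. By the lemma just proved, $WF$ embeds as a finite-index subgroup of $\Gamma^*_F$, the group of isotopy classes of self-homeomorphisms of the orbifold $O_F$ preserving the types of singular points (elliptic points of order $q$ among themselves, mirror points among themselves, corner reflectors of order $p$ among themselves). Since $\vcd$ is a commensurability invariant for virtually torsion free groups, it suffices to identify $\Gamma^*_F$ up to commensurability with the appropriate mapping class group of a surface with punctures and boundary.

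The first step is to unwind what $\Gamma^*_F$ really is. Following the remark immediately preceding the statement, I would reinterpret the underlying topological surface $S_F$ of $O_F$ as a surface with $b_c$ genuine boundary components (those boundary circles of $S_F$ carrying at least one corner reflector) and $b_m + e_F$ punctures, where the $b_m$ punctures come from the mirror-only boundary circles and the $e_F$ punctures come from the interior elliptic points. Under this dictionary, a self-homeomorphism of $O_F$ preserving singularity types, up to isotopy, is exactly a self-homeomorphism of $S_F$ (viewed with this puncture/boundary structure) up to isotopy, \emph{except} that it is allowed to permute punctures of the same ``weight'': elliptic punctures of equal order $q$ may be swapped, mirror-boundary punctures may be swapped among themselves, and corner-boundary components with matching corner data may be swapped. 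Forgetting these finitely many permitted permutations is a homomorphism $\Gamma^*_F \to \mathrm{Sym}$ into a finite symmetric group, whose kernel $\Gamma^{**}_F$ fixes every puncture and every boundary component; this kernel has finite index in $\Gamma^*_F$.

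The second step is to observe that $\Gamma^{**}_F$ is precisely a finite-index subgroup of the mapping class group $\calNgn[b_c]{g_F}{b_m+e_F}$ when $S_F$ is non-orientable, or $\calSgn[b_c]{g_F}{b_m+e_F}$ when $S_F$ is orientable; in fact it coincides with the pure mapping class group up to the usual subtlety about the action on tangent directions at punctures coming from elliptic points, which again only contributes a finite-index correction. Chaining the commensurabilities $WF \sim \Gamma^*_F \sim \Gamma^{**}_F \sim \calNgn[b_c]{g_F}{b_m+e_F}$ (resp.\ the orientable analogue) gives the commensurability assertion, and then $\vcd$ of commensurable virtually torsion free groups agree, yielding the displayed formula. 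One should also note that since $g \geq 3$, the orbifold $O_F$ has negative Euler characteristic (by Riemann--Hurwitz, $|F|\chi(O_F) = \chi(N_g) = 2-g < 0$), so the relevant mapping class groups are the ``large'' ones and Nielsen realization applies, keeping us away from the sporadic low-complexity cases.

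The main obstacle I anticipate is making the passage from ``homeomorphisms of the orbifold $O_F$'' to ``homeomorphisms of the punctured/bounded surface $S_F$'' fully rigorous, in particular handling the corner reflectors correctly: a boundary component carrying corner reflectors cannot simply be treated as a puncture, because the corner points subdivide it and a homeomorphism must respect that combinatorial structure along the boundary. One must check that an orbifold homeomorphism of such a boundary circle, up to isotopy, is the same data as a homeomorphism of the surface fixing that boundary circle setwise (and hence, after passing to a finite-index subgroup, pointwise), the corner reflectors of a given order behaving like marked points \emph{on} the boundary; the finitely many ways of cyclically permuting equal-order corner reflectors along a boundary again only affect things up to finite index. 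A careful but routine bookkeeping argument, parallel to Maher's treatment in \cite{Ma11} of the orientable case, should close this gap, and I would present it at roughly that level of detail.
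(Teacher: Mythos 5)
Your proposal is correct and follows essentially the same route as the paper: both pass from $WF$ to $\Gamma^*_F$ via the preceding lemma, reinterpret $S_F$ as a surface with $b_c$ boundary components and $b_m+e_F$ punctures, and compare finite-index ``pure'' subgroups on each side, handling the boundary circles with corner reflectors by a degree/permutation argument that only costs finite index. The paper makes this explicit with an injective homomorphism $\theta$ from the pure mapping class group into $P\Gamma^*_F$ whose image is the kernel of a degree map to $(\dbZ/2)^{b_c}$, which is exactly the bookkeeping you defer to at the end.
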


\begin{proof}
We are going to prove that $\Gamma^*_F$ is commensurable with $\calNgn[b_c]{g_F}{b_m + e_F}$ if $O_F$ is non-orientable or  with $\calSgn[b_c]{g_F}{b_m + e_F}$ if $O_F$ is orientable.

We think of the underlying surface $S_F$ of $O_F$ as a surface with $b_c$ boundaries and $b_m + e_F$ punctures (see the paragraph before the statement of the theorem). Recall that $P\calNgn[b_c]{g_F}{b_m + e_F} $ (respectively $P\calSgn[b_c]{g_F}{b_m + e_F}$) denotes the finite index subgroup of $\calNgn[b_c]{g_F}{b_m + e_F}$ (respectively $\calSgn[b_c]{g_F}{b_m + e_F}$) consisting of homotopy classes of those homeomorphisms which fix the set of punctures pointwise. Similarly denote by $P\Gamma^*_F$ the subgroup of $\Gamma^*_F$ consisting of those elements which fix each elliptic and corner point and leaves each boundary component of the underlying surface $S_F$ invariant as a set. Notice that in this case we also have that $P\Gamma^*_F$ is a finite index subgroup. We claim that if $O_F$ is non-orientable, the groups $P\calNgn[b_c]{g_F}{b_m + e_F}$ and $P\Gamma^*_F$ are conmensurable.

Consider the map 
\begin{align*}
\theta : P\calNgn[b_c]{g_F}{b_m + e_F} &\longrightarrow  P\Gamma^*_F\\
[f]      &\longmapsto      [f]
\end{align*}

The map $\theta$ is well defined because for every $[f]\in P\calNgn[b_c]{g_F}{b_m + e_F}$, the homeomorphism $f$ fixes each puncture  and each boundary component of $S_F$ pointwise, in particular it fixes those points that correspond to corner points. Even more, if two of those homeomorphisms are in the same class, the homotopy between them fixes each puncture and each boundary point.
It is clear that $\theta$ is a group homomorphism and, even more, it is injective. 

We want to know who is the image of $\theta$. Let $[f]\in P\Gamma^*_F$, then $f$ restricts to a homeomorphism $f|_{b_i}$ on each of the $b_c$ boundaries with corner points. We define the following group homomorphism

\begin{align*}
D:P\Gamma^*_F & \longrightarrow  \Z /2 \times \cdots \times \Z /2 \\
  [g] & \longmapsto   \left( \mathrm{dg}(g|_{b_1}), \dots , \mathrm{dg}(g|_{b_{b_c}}) \right) 
\end{align*}

\noindent where $\mathrm{dg}(f)$ is the degree of a homeomorphism $f$ from the circle to the circle. Clearly $\Ima(\theta) \subset \Ker (D)$ and as every homeomorphism of the circle with $\mathrm{dg}(f)=1$ is homotopic to the identity, we have that every $[g]\in \Ker (D)$ has a representant $g'$ that leaves the boundaries $b_1,\dots,b_{b_c}$ fixed pointwise. Then $\Ima(\theta) = \Ker(D)$. As $\Ker(D)$ has finite index in $P\Gamma^*_F$, then $P\calNgn[b_c]{g_F}{b_c + e_F}$ is commensurable with $P\Gamma^*_F$ and therefore $\Gamma^*_F$ is commensurable with $\calNgn[b_c]{g_F}{b_m + e_F}$. From the previous lemma we have that

\[
\vcd(WF) = \vcd(\calNgn[b_c]{g_N}{b_m+e_F}).
\]

If $O_F$ is orientable a similar argument proves that

\[
\vcd(WF) = \vcd(\calSgn[b_c]{g_N}{b_m+e_F}).
\]
\end{proof}

\section{Some useful results} \label{section:sueful:results}

\begin{lemma}\label{lemma:lambda:inequalities}
Let $F$ be a finite group, then
\[\lambda(F)\leq \frac{|F|}{2}\quad\text{and}\quad\lambda(F)\leq \log_2(|F|).\]
\end{lemma}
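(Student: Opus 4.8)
The statement asserts two elementary bounds on the length $\lambda(F)$ of a finite group $F$: namely $\lambda(F)\le |F|/2$ and $\lambda(F)\le \log_2(|F|)$. The plan is to start from a maximal-length subnormal (in fact, just strictly increasing) chain
\[
1=F_0<F_1<\cdots<F_i=F,\qquad i=\lambda(F),
\]
and track how the order grows along the chain. The key observation is that each strict inclusion $F_{j-1}<F_j$ forces $|F_j|\ge 2|F_{j-1}|$, because $|F_{j-1}|$ divides $|F_j|$ by Lagrange's theorem and the two orders are not equal. Iterating this inequality from $j=1$ to $j=i$ gives $|F|=|F_i|\ge 2^i=2^{\lambda(F)}$, and taking $\log_2$ yields the second inequality $\lambda(F)\le\log_2(|F|)$ immediately.

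For the first inequality I would note that the second one already implies it whenever $|F|\ge 2$, since $\log_2(|F|)\le |F|/2$ holds for all integers $|F|\ge 2$ (this is a routine comparison: equality at $|F|=2$ and at $|F|=4$, strict in between and for larger values, which one checks by the monotonicity of $|F|/2 - \log_2(|F|)$ in the integer variable, or simply by the elementary fact that $2^{n}\ge 2n$ for $n\ge 1$). The remaining case is $|F|=1$, where $\lambda(F)=0$ and both bounds are trivially satisfied. So the first inequality is a formal consequence of the second together with the numerical estimate $2^n\ge 2n$ for $n\ge 1$.

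There is essentially no obstacle here; the only point requiring the tiniest bit of care is the elementary inequality $2^n\ge 2n$ for $n\ge1$, which follows by induction ($2^{n+1}=2\cdot 2^n\ge 2\cdot 2n=4n\ge 2(n+1)$ since $n\ge 1$), and the base case $n=1$ is an equality. One could alternatively prove $\lambda(F)\le|F|/2$ directly by the same chain argument, using that a strict chain of length $i$ inside a set of size $|F|$ that also respects divisibility grows at least geometrically, but routing it through $\log_2$ is cleaner. I would present the $\log_2$ bound first, then derive the $|F|/2$ bound from it.
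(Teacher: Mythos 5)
Your core argument for the second inequality --- each strict inclusion $F_{j-1}<F_j$ in a maximal chain at least doubles the order by Lagrange, hence $|F|\ge 2^{\lambda(F)}$ --- is correct and is essentially the paper's argument (the paper phrases it as $\lambda(F)\le p$ with $p$ the number of prime factors of $|F|$ counted with multiplicity, and $p\le\log_2|F|$; the content is the same). The overall strategy of deducing the first inequality from the second is also the paper's.

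However, the step where you do that deduction contains a concrete error: the claim that $\log_2(|F|)\le |F|/2$ holds for \emph{all} integers $|F|\ge 2$, with ``equality at $|F|=2$ and at $|F|=4$, strict in between,'' is false at $|F|=3$, where $\log_2 3\approx 1.585 > 3/2$ (equivalently, $2^m\ge m^2$ fails at $m=3$). Your auxiliary fact $2^n\ge 2n$ for integers $n\ge 1$ is true but only yields $\log_2 m\le m/2$ when $m$ is a power of $2$; it is not the inequality you need for general $m$. The lemma itself still holds at $|F|=3$ because $\lambda(F)=1\le 3/2$, but your chain $\lambda(F)\le\log_2|F|\le|F|/2$ breaks there. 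The fix is exactly what the paper does: treat $|F|\in\{1,2,3\}$ by hand (where $\lambda(F)\le 1$) and invoke $\log_2(|F|)\le|F|/2$ only for $|F|\ge 4$; alternatively, use that $\lambda(F)$ is an integer, so $\lambda(F)\le\lfloor\log_2|F|\rfloor$, and note that $\lfloor\log_2 m\rfloor\le m/2$ does hold for every integer $m\ge 1$.
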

\begin{proof}
For $F=1$ the result is trivially true. Since for $|F|=2,3$ the length $\lambda(F)=1$, the result is clearly true in these cases. For $|F|\geq4$, $\log_2(|F|)\leq\frac{|F|}{2}$. Hence it is enough to prove the second inequality. 

As a consequence of Langrange's theorem $\lambda(|F|)\leq p$ where $p$ is the number of prime factors in the prime decomposition of $|F|$. The number $p$ can be as large as possible exactly when all prime divisors of $|F|$ are equal to 2, that is $p\leq \log_2(|F|)$. Now the result follows.
\end{proof}

\begin{lemma}\label{lemma:ab:inequality}
Let $a$ and $b$ be natural numbers such that $a\geq 1$ and $b\geq 2$. Consider the following inequality
\[ a+\frac{b}{2}\leq b(a-\epsilon)-1 \]

\begin{enumerate}
\item\label{lemma:ab:inequality:zero} If $\epsilon = 0$, then the inequality holds except when $(a,b)=(1,2)$ or $(1,3)$.
\item\label{lemma:ab:inequality:onehalf} If $\epsilon = \frac{1}{2}$, then the inequality holds except when $(a,b)=(1,b)$ or $(2,2)$.
\item\label{lemma:ab:inequality:one} If $\epsilon = 1$, then the inequality holds except when $(a,b)=(1,b)$, $(2,2)$, $(2,3)$, $(2,4)$, $(2,5)$ or $(3,2)$.
\end{enumerate}
\end{lemma}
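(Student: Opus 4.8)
The plan is to rearrange the inequality into a linear condition in $a$ and treat it as a threshold on $a$ for each fixed $b$, then sweep over the finitely many small exceptional pairs by hand. Writing $\epsilon$ explicitly, the inequality $a+\frac{b}{2}\leq b(a-\epsilon)-1$ is equivalent to
\[
a(b-1)\geq \frac{b}{2}+1+\epsilon b,
\]
i.e. (since $b\geq 2$ so $b-1\geq 1>0$)
\[
a\geq \frac{\frac{b}{2}+1+\epsilon b}{b-1}.
\]
So the whole lemma reduces to: for which $(a,b)$ with $a\geq 1$, $b\geq 2$ does $a$ fail to meet this lower bound. The right-hand side, call it $\rho(b,\epsilon)$, is a decreasing function of $b$ on $[2,\infty)$ for each fixed $\epsilon\in\{0,\tfrac12,1\}$ (one checks the derivative in $b$, or just that numerator grows slower than denominator after the first step), with $\rho(b,\epsilon)\to \tfrac12+\epsilon<2$ as $b\to\infty$. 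Hence for all $b$ large enough $\rho(b,\epsilon)\leq 2$, so every $a\geq 2$ works and only $a=1$ can be exceptional; and then one checks the finitely many remaining small $b$ values directly.

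Concretely I would organize it as follows. First I would handle $a=1$: the inequality becomes $1+\frac b2\leq b(1-\epsilon)-1=b-\epsilon b-1$, i.e. $2\leq \frac b2-\epsilon b=b(\tfrac12-\epsilon)$. For $\epsilon=\tfrac12$ and $\epsilon=1$ the right side is $\leq 0$, so it never holds — giving the exception $(1,b)$ for all $b\geq 2$ in cases \eqref{lemma:ab:inequality:onehalf} and \eqref{lemma:ab:inequality:one}. For $\epsilon=0$ it reads $2\leq b/2$, i.e. $b\geq 4$, so $(1,2)$ and $(1,3)$ are the only exceptions in case \eqref{lemma:ab:inequality:zero}, matching the statement. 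Next, for $a\geq 2$ I would use monotonicity of $\rho(b,\epsilon)$ in $b$: it suffices to find, for each $\epsilon$, the largest $b$ for which $\rho(b,\epsilon)>2$ (equivalently $\frac b2+1+\epsilon b>2(b-1)$, i.e. $3>b(1-2\epsilon)/\dots$ — again a one-line linear inequality in $b$), and then for those finitely many $b$ and for $a=2,3,\dots$ up to the corresponding threshold $\lceil\rho(b,\epsilon)\rceil-1$ I would just verify the original inequality numerically. For $\epsilon=0$ this leaves nothing beyond $a=1$ (one checks $\rho(b,0)\le 2$ already for $b\ge 4$, and $b=2,3$ force $a=1$ by the bound $a\ge\rho$, e.g. $\rho(2,0)=4$, $\rho(3,0)=\tfrac52$, but $(2,2),(2,3)$ actually satisfy it — so I must double-check signs here). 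For $\epsilon=\tfrac12$: $\rho(2,\tfrac12)=\tfrac{1+1+1}{1}=3$ gives the exception $(2,2)$, and $\rho(b,\tfrac12)\le 2$ for $b\ge 3$, so no other $a\ge 2$ exceptions. For $\epsilon=1$: $\rho(2,1)=4$, $\rho(3,1)=\tfrac{3/2+1+3}{2}=\tfrac{11}{4}$, $\rho(4,1)=\tfrac{2+1+4}{3}=\tfrac73$, $\rho(5,1)=\tfrac{5/2+1+5}{4}=\tfrac{17}{8}>2$, $\rho(6,1)=\tfrac{3+1+6}{5}=2$, $\rho(b,1)<2$ for $b\ge 7$; so the $a\ge 2$ exceptions are exactly $(2,2),(2,3),(2,4),(2,5)$ (from $a<\rho(b,1)$ with $b\le 5$) together with $(3,2)$ (since $\rho(2,1)=4>3$), which is precisely the list in \eqref{lemma:ab:inequality:one}.

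The argument is entirely elementary — there is no real obstacle, only bookkeeping. The one place to be careful, and the step I expect to need the most attention, is getting the direction of the rearrangement right (multiplying through by $b-1>0$ is safe, but one must not accidentally flip an inequality) and being exhaustive about the boundary cases where $\rho(b,\epsilon)$ equals exactly $2$ or is just above it, since those determine whether a pair like $(3,2)$ or $(2,5)$ is included. I would present the proof by first stating the equivalent form $a\geq\rho(b,\epsilon)$, then a short paragraph for each of the three values of $\epsilon$ carrying out the finite check, which keeps the casework transparent and self-verifying.
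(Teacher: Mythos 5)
Your proof is correct and follows essentially the same route as the paper's: both rearrange the inequality into an equivalent linear condition (the paper works with $2\leq 2ab-2b\epsilon-2a-b$ and bounds it below using $a\geq 1$ or $b\geq 2$; you solve for the threshold $a\geq\rho(b,\epsilon)=\frac{b/2+1+\epsilon b}{b-1}$ and use monotonicity in $b$), and then dispatch the finitely many remaining small pairs by hand, arriving at exactly the exceptional sets in the statement. The one point you flagged as needing a double-check is only an arithmetic slip: you dropped the factor $2$ in the denominator when clearing $\frac{b}{2}$, so the correct values are $\rho(2,0)=\frac{1+1}{1}=2$ and $\rho(3,0)=\frac{3/2+1}{2}=\frac{5}{4}$, which are consistent with $(2,2)$ and $(2,3)$ satisfying the inequality and confirm that for $\epsilon=0$ the only exceptions are $(1,2)$ and $(1,3)$.
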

\begin{proof}
The inequality in our statement is equivalent to each of the following inequalities:

\begin{gather*}
2a+b\leq 2b(a-\epsilon)-2,\\
2a+b\leq 2ab-2b\epsilon-2\\
2\leq 2ab-2b\epsilon -2a-b
\end{gather*}

Let us proceed to prove each one of the claims.

\begin{enumerate}
    \item Let $\epsilon=0$. Assume $a\geq1$. Then 
    $a(2b-2)-b\geq (2b-2)-b=b-2$. Hence we want $b-2\geq 2$  which is true if and only if $b\geq 4$. We conclude our inequality is true if $a\geq1$ and $b\geq 4$.
    
    \noindent Now assume that $b\geq 2$. Then $b(2a-1)-2a\geq 2(2a-2)-2a=2a-4$. Hence we want $2a-4\geq2$ which is true if and only if $a\geq 3$. We conclude our inequality is true if $a\geq 3$ and $b\geq 2$. Therefore the inequality is true except possibly when $(a,b)=(1,2)$, $(1,3)$, $(2,2)$, and $(2,3)$. One can verify by hand that the inequality holds for $(2,2)$ and $(2,3)$.
    
    \item Let $\epsilon=\frac{1}{2}$. Clearly the inequality is not true when $a=1$. Assume $a\geq 2$, then, $a(2b-2)-2b\geq 2(2b-2)-2b=2b-4$. We want $2b-4\geq 2$ which is true if and only if $b\geq 3$. Therefore the inequality is true when $a\geq 2$ and $b\geq 3$.
    
    \noindent Now assume $b\geq 2$. Then $b(2a-2)-2a\geq 2(2a-2)-2a=2a-4$. We want $2a-4\geq 2$ which is trus if and only if $a\geq 3$. Therefore the inequality is true when $a\geq 3$ and $b\geq 2$. We conclude the inequality is true with the exception of the pairs $(a,b)=(1,b)$, and $(2,2)$.
    
    \item This part can be proved under the same lines as the previous ones. 
\end{enumerate}

\end{proof}

\begin{proposition}\label{prop:vcd:inequalities}
Let $F$ be a finite subgroup of $\calN_g$ and assume that there is an $\epsilon$ such that
\[
\vcdNgoverF\geq {\vcd(WF)}-\epsilon.
\]
\
Then:
\begin{enumerate}
    \item\label{prop:vcd:inequalities:zero} If $\epsilon=0$, then $\vcd(WF)+\lambda(F)\leq \vcd(\calN_g)$ when $g\geq 4$ and $\vcd(WF)\geq 1 $.
    \item\label{prop:vcd:inequalities:onehalf} If $\epsilon=\frac{1}{2}$, then $\vcd(WF)+\lambda(F)\leq \vcd(\calN_g)$ when $g\geq 4$ and $\vcd(WF)\geq 2$.
    \item\label{prop:vcd:inequalities:one} If $\epsilon=1$, then $\vcd(WF)+\lambda(F)\leq \vcd(\calN_g)$ when $g\geq 5$ and $\vcd(WF)\geq 2$. Moreover, if $g=4$, then $\vcd(WF)+\lambda(F)\leq \vcd(N_g)+1=4$ provided $\vcd(WF)\geq 2$.
\end{enumerate}
\end{proposition}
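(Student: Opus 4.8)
The strategy is to unwind everything into the elementary inequality of \cref{lemma:ab:inequality}, using \cref{thm:vcd:weyl:groups} to identify $\vcd(WF)$ with the $\vcd$ of a mapping class group of the quotient orbifold, \cref{lemma:lambda:inequalities} to bound $\lambda(F)$ by $|F|/2$, and the Riemann--Hurwitz formula \eqref{eq:riemann:hurwitz} to control $|F|$ in terms of $\chi(O_F)$. Concretely, first I would observe that it suffices to prove the displayed inequality
\[
\frac{\vcd(\calN_g)+1}{|F|}\geq \vcd(WF)-\epsilon,
\]
since combining it with $\lambda(F)\le |F|/2$ and rearranging gives exactly $\vcd(WF)+\lambda(F)\le \vcd(\calN_g)$ (respectively $\le \vcd(\calN_g)+1$ in the $g=4$ addendum): indeed if $\frac{N+1}{|F|}\ge \vcd(WF)-\epsilon$ with $N=\vcd(\calN_g)=2g-5$, then $\vcd(WF)+\lambda(F)\le \vcd(WF)+\frac{|F|}{2}\le \epsilon + \frac{N+1}{|F|}+\frac{|F|}{2}$, and one checks the right side is $\le N$ using $|F|\ge 2$. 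So really the content is the displayed $\vcd$-inequality, and the bulk of the argument is a case analysis over whether $O_F$ is orientable or non-orientable and over the genus/puncture regime of $S_F$.

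Next, using \cref{thm:vcd:weyl:groups} and \cref{prop:vcd:mpcg}, I would write $\vcd(WF)$ explicitly. In the non-orientable case $WF$ is commensurable with $\calNgn[b_c]{g_F}{b_m+e_F}$, and in the generic regime ($g_F\ge 3$, or $g_F=2$, with punctures/boundary present) this $\vcd$ is of the form $2g_F + (b_m+e_F) + 2b_c - 4$ or $2g_F-5$; in the orientable case it is $4g_F + (b_m+e_F)+2b_c-4$ or $4g_F-5$. On the other hand Riemann--Hurwitz gives $|F|\,\chi(O_F) = \chi(N_g) = 2-g$, and since $\chi(O_F)=\chi(S_F)-C_F/2-E_F$ with $\chi(S_F)$ determined by $g_F, b_m, b_c$ and $E_F\ge e_F/2$, $C_F\ge c_F/2$ from \eqref{eq:inequalities:eF:cF}, I can bound $\vcd(WF)$ from above by a linear expression in $-\chi(O_F) = |F|^{-1}(g-2)$. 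The upshot is that $\vcd(WF)$ is bounded by something like $a\cdot(-\chi(O_F))\cdot|F|$-type data — more precisely, after setting $a := \vcd(WF)$ or an auxiliary integer and $b := |F|$, the required inequality $\frac{2g-4}{|F|}\ge \vcd(WF)-\epsilon$ reduces, via $2g-4 = 2|F|(-\chi(O_F))$ and a lower bound $-\chi(O_F)\ge$ (a suitable linear function that dominates $\vcd(WF)$ up to the constants), to precisely an instance of $a+\frac b2 \le b(a-\epsilon)-1$, i.e. to \cref{lemma:ab:inequality}. The exceptional pairs $(a,b)$ excluded in \cref{lemma:ab:inequality} — $(1,b)$, $(2,2)$, $(2,3)$, $(2,4)$, $(2,5)$, $(3,2)$ — are exactly what forces the hypotheses $\vcd(WF)\ge 1$ or $\ge 2$ and $g\ge 4$ or $g\ge 5$, and they must be checked to be either impossible (for geometric reasons about which orbifolds can occur) or handled directly; the $g=4$ addendum is the residual case where $(a,b)$ can be an excluded pair but the weaker bound $\vcd(\calN_g)+1$ still goes through.

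The main obstacle I anticipate is the bookkeeping of the low-complexity quotient orbifolds: when $g_F$ is small ($0$, $1$, or $2$) and $b_m+e_F$ or $b_c$ is small, the piecewise formula for $\vcd(\calNgn[b_c]{g_F}{b_m+e_F})$ (or its orientable analogue) in \cref{prop:vcd:mpcg} is not the "generic" linear one, so the clean reduction to \cref{lemma:ab:inequality} breaks and these finitely many orbifold types have to be ruled out or checked by hand — this is where the hypothesis $\vcd(WF)\ge 1$ (resp. $\ge 2$) earns its keep, since it discards precisely the degenerate quotients $O_F$ with $\vcd(WF)=0$ (resp. $\le 1$) that would otherwise be counterexamples. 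A secondary subtlety is keeping track of the two running cases ($O_F$ orientable vs.\ non-orientable), since the coefficient of $g_F$ differs ($4$ vs.\ $2$) and correspondingly the constant $a$ in the \cref{lemma:ab:inequality} reduction changes; I would organize the proof as two parallel case analyses, in each deriving the relevant $(a,b)$ and invoking the matching item of \cref{lemma:ab:inequality}.
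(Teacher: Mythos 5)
Your proposal misreads the logical structure of the proposition and, independently of that, its central arithmetic step is false. First, the displayed inequality $\vcdNgoverF\geq \vcd(WF)-\epsilon$ is the \emph{hypothesis} of the proposition, not something to be established inside it: the proposition is a purely arithmetic reduction, and the geometric work of verifying that inequality for the various quotient orbifolds (via \cref{thm:vcd:weyl:groups}, \cref{prop:vcd:mpcg}, the Riemann--Hurwitz formula, and the case analysis over $g_F$, $e_F$, $b_m$, $b_c$) is deferred to \cref{prop:vcd:geq:hurwitz} and to Sections \ref{section:non-orientable:case} and \ref{section:orientable:case}, where the proposition is invoked only after the hypothesis has been checked. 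By setting out to prove the displayed inequality here you are attempting a different, and in full generality false, statement; the fact that it holds only case by case (and sometimes fails) is precisely what produces the exceptional genera $g=4,5$.

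Second, and more seriously, your deduction would not work even granting the hypothesis. You bound $\vcd(WF)\le\epsilon+\frac{N+1}{|F|}$ with $N=\vcd(\calN_g)$ and then assert $\vcd(WF)+\lambda(F)\le\epsilon+\frac{N+1}{|F|}+\frac{|F|}{2}\le N$ ``using $|F|\ge 2$''; but the middle expression is unbounded in $|F|$ (take $N=3$ and $|F|=100$), so the final inequality fails. The correct chain runs in the opposite direction:
\[
\vcd(WF)+\lambda(F)\;\le\;\vcd(WF)+\frac{|F|}{2}\;\le\;|F|\bigl(\vcd(WF)-\epsilon\bigr)-1\;\le\;\vcd(\calN_g),
\]
where the first step is \cref{lemma:lambda:inequalities}, the middle step is exactly \cref{lemma:ab:inequality} with $a=\vcd(WF)$ and $b=|F|$ (it is the product $|F|\cdot\vcd(WF)$ that absorbs the $|F|/2$ term, which is why the hypotheses $\vcd(WF)\ge 1$, resp.\ $\ge 2$, are needed), and the last step is the rearranged hypothesis. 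The pairs excluded in \cref{lemma:ab:inequality} are then handled not by geometric impossibility but by noting that for each of them $|F|\le 5$, so $\lambda(F)$ is small and $\vcd(WF)+\lambda(F)$ is at most $2$, $3$, and $4$ in the three cases respectively, hence at most $2g-5=\vcd(\calN_g)$ in the stated range of $g$ (and at most $\vcd(\calN_4)+1=4$ in the ``moreover'' clause).
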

\begin{proof}
The inequality of the statement is equivalent to $|F|(\vcd(WF)-\epsilon)-1\leq \vcd(\calN_g)$. If we want to prove $\vcd(WF)+\lambda(F)\leq \vcd(\calN_g)$, it is enough to prove $\vcd(WF)+\lambda(F)\leq |F|(\vcd(WF)-\epsilon)-1$. On the other hand by \Cref{lemma:lambda:inequalities}, it will suffice to prove
\begin{equation}\label{eq:lemma:vcd:inequalities}
\vcd(WF)+\frac{|F|}{2} \leq |F|(\vcd(WF)-\epsilon)-1.
\end{equation}

Lets proceed to prove each item in our statement.

\begin{enumerate}
    \item Let $\epsilon=0$. Then by \cref{lemma:ab:inequality} (\ref{lemma:ab:inequality:zero}), equation \eqref{eq:lemma:vcd:inequalities} is true except when $(\vcd(F),|F|)=(1,2)$ or $(1,3)$. Now, we only have to prove our claim for these exceptional cases. For $|F|=2$ or $3$, $\lambda(|F|)=1$. Hence, for the exceptional cases we have $\vcd(WF)+\lambda(F)\leq 2$. By \cref{prop:vcd:mpcg}, $\vcd(\calN_g)=2g-5$ for $g\geq 3$. Therefore $\vcd(WF)+\lambda(F)\leq \vcd(\calN_g)$ for $g\geq 4$.

    \item Let $\epsilon=\frac{1}{2}$. Then by \cref{lemma:ab:inequality} (\ref{lemma:ab:inequality:onehalf}), equation \eqref{eq:lemma:vcd:inequalities} is true except when $(\vcd(F),|F|)=(1,|F|)$ or $(2,2)$. For $|F|=2$, $\lambda(|F|)=1$. Hence, for the exceptional case we have $\vcd(WF)+\lambda(F)\leq 3$. By \cref{prop:vcd:mpcg}, $\vcd(\calN_g)=2g-5$ for $g\geq 3$. Therefore $\vcd(WF)+\lambda(F)\leq \vcd(\calN_g)$ for $g\geq 4$.

    \item Let $\epsilon=1$. Then by \cref{lemma:ab:inequality} (\ref{lemma:ab:inequality:one}), equation \eqref{eq:lemma:vcd:inequalities} is true except when $(\vcd(F),|F|)=(1,|F|)$, $(2,2)$, $(2,3)$, $(2,4)$, $(2,5)$,  or $(3,2)$. For $|F|=2,3,5$, $\lambda(|F|)=1$ and for $|F|=4$ we get $\lambda(|F|)=1$. Hence, for the exceptional case we have $\vcd(WF)+\lambda(F)\leq 4$. By \cref{prop:vcd:mpcg}, $\vcd(\calN_g)=2g-5$ for $g\geq 3$. Therefore $\vcd(WF)+\lambda(F)\leq \vcd(\calN_g)$ for $g\geq 4$. The moreover part also follows now.

\end{enumerate}

\end{proof}

\begin{proposition}\label{prop:vcd:geq:hurwitz}
Let $g\geq 3$, and let $F$ be a finite subgroup of $\calN_g$. Then
\[
\vcdNgoverF\geq \alpha g_F-4+e_F+b_m+2b_c+b_m+\frac{c_F}{2}
\]
where $\alpha=2$ if $O_F$ is non-orientable or $\alpha=4$ if $O_F$ is orientable.
\end{proposition}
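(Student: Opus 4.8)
The plan is to run the Riemann–Hurwitz formula \eqref{eq:riemann:hurwitz} backwards: express $\chi(N_g)=2-g$ in terms of the orbifold data of $O_F$, solve for $\frac{\chi(N_g)}{|F|}=\chi(O_F)$, and then read off $\vcd(\calN_g)/|F|=(2g-5)/|F|$ using the formula $\vcd(\calN_g)=2g-5$ for $g\ge 3$ from \cref{prop:vcd:mpcg}. The point is that $-\chi(O_F)$ and $\vcd(\calN_g)/|F|=(g-2-3/|F|)$-type quantities differ only by a small additive constant, so a lower bound for $\vcd(\calN_g)/|F|$ in terms of the orbifold Euler characteristic of $O_F$ translates into the claimed lower bound in terms of $g_F,e_F,b_m,b_c,c_F$.

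First I would write down $\chi(O_F)$ explicitly. Using the notation set up in the orbifold subsection, $\chi(O_F)=\chi(S_F)-C_F/2-E_F$, where $S_F$ is the underlying surface with genus $g_F$ and $b=b_m+b_c$ boundary components. Thus $\chi(S_F)=\alpha' - \alpha g_F - b$ for the appropriate constant (here $\chi$ of a genus-$g_F$ orientable surface with $b$ boundary components is $2-2g_F-b$, and of a non-orientable one is $2-g_F-b$), i.e. $\chi(S_F)=2-\alpha g_F-b$ with $\alpha=2$ in the orientable case and $\alpha=1$ in the non-orientable case — but note the proposition's $\alpha$ is $4$ resp.\ $2$, i.e.\ twice this, because the surface $N_g$ upstairs is non-orientable so the lift forces a factor; I would track this carefully. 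Then \eqref{eq:riemann:hurwitz} gives
\[
2-g=\chi(N_g)=|F|\,\chi(O_F)=|F|\left(2-\tfrac{\alpha}{2} g_F-b-\tfrac{C_F}{2}-E_F\right),
\]
so that $2g-5 = 2|F|\left(\tfrac{\alpha}{2}g_F+b+\tfrac{C_F}{2}+E_F\right)-|F|-4+(something)$; dividing by $|F|$ and using the elementary bounds \eqref{eq:inequalities:eF:cF} to replace $E_F\ge e_F/2$... no, actually I want a \emph{lower} bound on $\vcdNgoverF$, and $E_F$ appears with a positive sign, so I would use $E_F\le e_F$ is the wrong direction; instead I would keep $E_F$ and at the end use $E_F\ge e_F/2$ only where it helps, or more simply bound the whole expression crudely. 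The cleanest route: show directly that
\[
\vcdNgoverF = \frac{2g-5}{|F|} \ge 2\left(\tfrac{\alpha}{2}g_F + b + \tfrac{C_F}{2}+E_F\right) - 1 - \frac{4}{|F|},
\]
and then replace $2b=2b_m+2b_c$ (to get the $b_m+2b_c+b_m$ pattern in the statement), replace $2E_F\ge e_F$ and $2\cdot\tfrac{C_F}{2}=C_F\ge c_F/2$ via \eqref{eq:inequalities:eF:cF}, absorb the $-1-4/|F|$ into the $-4$ using $|F|\ge 1$ and $g\ge 3$ (so $2g-5\ge 1$), and check the constants line up. The asymmetric-looking term $e_F+b_m+2b_c+b_m+c_F/2$ in the statement should emerge precisely from $2(b_m+b_c)+2E_F+C_F$ after the substitutions, with one extra $b_m$ coming from the convention (explained in the paragraph before \cref{thm:vcd:weyl:groups}) that mirror-only boundary components count as punctures while corner-containing ones count as boundaries, which are weighted differently in the Euler characteristic versus in the $\vcd$ formulas.

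The main obstacle I expect is \textbf{bookkeeping the genus and boundary conventions consistently}: the surface $N_g$ is closed non-orientable, the quotient $O_F$ can be orientable or non-orientable and has boundary (from mirrors) as well as elliptic and corner points, and the underlying surface $S_F$ is being re-interpreted (mirror boundaries $\leftrightarrow$ punctures, corner boundaries $\leftrightarrow$ honest boundaries) in the way needed for \cref{thm:vcd:weyl:groups}. Getting the coefficient $\alpha$ ($2$ vs.\ $4$) right, and making sure the $-4$ constant and the $c_F/2$ (rather than $C_F$) survive the estimates \eqref{eq:inequalities:eF:cF} without slack that breaks the inequality, is where care is needed; everything else is a one-line substitution into Riemann–Hurwitz. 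I would finish by separately sanity-checking the two cases $O_F$ orientable / non-orientable and noting the case $F=1$ (where $g_F=g$, $e_F=c_F=b_m=b_c=0$) reduces to the identity $2g-5\ge 2g-4$... which fails, so in fact the inequality must be using that when $F=1$ the right-hand side is $2g-4-4=2g-8$; this tells me the constant is genuinely $-4$ and not $0$, confirming the form of the statement, and I would make sure the general estimate respects it.
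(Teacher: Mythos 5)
Your overall strategy (Riemann--Hurwitz plus the elementary bounds \eqref{eq:inequalities:eF:cF}) is exactly the paper's, but there is a genuine error that derails the computation: you have misread the left-hand side. The quantity $\vcdNgoverF$ denotes $\frac{\vcd(\calN_g)+1}{|F|}$, not $\frac{\vcd(\calN_g)}{|F|}$, so for $g\geq 3$ the numerator is $2g-4=-2\chi(N_g)$ \emph{exactly}, and the whole proof is the two-line computation
\[
\vcdNgoverF=\frac{-2\chi(N_g)}{|F|}=-2\chi(O_F)=\alpha g_F+2b-4+C_F+2E_F\geq \alpha g_F+2b-4+\frac{c_F}{2}+e_F,
\]
which is the statement once $2b$ is rewritten as $b_m+2b_c+b_m$. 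There is no ``$-1-\frac{4}{|F|}$'' to absorb, no slack anywhere, and no role for the puncture/boundary reinterpretation: the asymmetric-looking term $e_F+b_m+2b_c+b_m+\frac{c_F}{2}$ is literally $e_F+2(b_m+b_c)+\frac{c_F}{2}$.

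Your own sanity check at $F=1$ exposes the problem --- you obtain $2g-5\geq 2g-4$, which is false --- but instead of revisiting the notation you invent an incorrect resolution (``the right-hand side is $2g-4-4=2g-8$''; it is $2g-4$, and with the correct left-hand side $\vcd(\calN_g)+1=2g-4$ the inequality is an \emph{equality} at $F=1$, i.e.\ the bound is tight there, so your proposed chain with the extra $-1-\frac{4}{|F|}$ cannot close). Two smaller slips: the doubling of $\alpha$ (from $\chi(S_F)=2-\tfrac{\alpha}{2}g_F-b$ to the $\alpha$ of the statement) comes purely from multiplying $\chi(O_F)$ by $-2$, not from any ``lift forcing a factor''; and the extra $b_m$ has nothing to do with the mirror-boundary-as-puncture convention used for \cref{thm:vcd:weyl:groups}.
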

\begin{proof}
The claim follows from the following chain of equalities and inequalities.
\begin{align*}
    \vcdNgoverF &= \frac{2g-4}{|F|}
    =\frac{-2\chi(N_g)}{|F|}
    =-2\chi(O_F)\\
    &=\alpha g_F+2b-4+C_F+2E_F\\
    &\geq \alpha g_F+2b+-4+\frac{c_F}{2}+e_F
\end{align*}
where the first equality is by \cref{prop:vcd:mpcg}, the third equality is by the Riemann-Hurwitz formula \eqref{eq:riemann:hurwitz}, and the last inequality follows from \eqref{eq:inequalities:eF:cF}.
\end{proof}

Now we state our last auxiliary result.

\begin{lemma}\label{lemma:smallest:possibilities}
Let $2\leq q\leq r\leq s$  be natural numbers. Then the smallest possible positive value for
\[k=1-\frac{1}{q}-\frac{1}{r}-\frac{1}{s}\]
is $\frac{1}{42}$ and it is reached exactly when $(q,r,s)=(2,3,7)$. If we additionally assume that two of the numbers $q$ , $r$, $s$ are equal, then the smallest possible value for $k$ is $\frac{1}{12}$ and it is reached when $(q,r,s)=(3,3,4)$.
\end{lemma}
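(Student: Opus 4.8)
The statement to prove is the elementary number-theoretic fact in Lemma \ref{lemma:smallest:possibilities}: for $2 \le q \le r \le s$ natural numbers, the minimum of $k = 1 - 1/q - 1/r - 1/s$ is $1/42$, attained only at $(2,3,7)$; and if two of $q,r,s$ coincide, the minimum is $1/12$, attained at $(3,3,4)$.

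This is the classical "enumerate small triples" argument familiar from the classification of spherical/hyperbolic triangle groups, so it should be routine to write.

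\textbf{Plan of proof.}

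\textbf{First part.} The plan is to bound $q$, then $r$, then $s$ in turn, working under the assumption $k > 0$ (since $(2,3,7)$ already shows the infimum is positive and at most $1/42$, any competing minimizer has $k > 0$, equivalently $1/q + 1/r + 1/s < 1$). First I would observe that since $q \le r \le s$ we have $1/q + 1/r + 1/s \le 3/q$, so $3/q < 1$ forces $q \ge 4$ would give $1/q + 1/r + 1/s \le 3/4 < 1$ — wait, that is consistent, so I need the reverse: to have any hope of $k$ small we do not need $q$ small, but to \emph{enumerate} I note $k < 1/42 \le$ anything forces nothing; instead the clean approach is: suppose $k \le 1/42$. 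Then $1/q + 1/r + 1/s \ge 1 - 1/42 = 41/42$, so $3/q \ge 41/42$, giving $q \le 126/41 < 4$, hence $q \in \{2,3\}$. If $q = 3$: then $1/r + 1/s \ge 41/42 - 1/3 = 27/42 = 9/14$, so $2/r \ge 9/14$, giving $r \le 28/9 < 4$, hence $r = 3$, and then $1/s \ge 9/14 - 1/3 = 13/42$, i.e. $s \le 42/13 < 4$, so $s = 3$; but $(3,3,3)$ gives $k = 0 < 1/42$, which is impossible since we also need $k \le 1/42$ and... actually $(3,3,3)$ gives $k=0$, so this is \emph{not} a valid case under $k \le 1/42$ only if $k$ must be $\le 1/42$ — it is, $0 \le 1/42$, so I must instead rule it out by noting we seek the \emph{minimum} which we will show is $1/42 > 0$; so I restrict to $k > 0$ from the start, i.e. $1/q+1/r+1/s < 1$, and separately to $k \le 1/42$ for the enumeration, and $(3,3,3)$ with $k = 0$ violates $k > 0$. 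Similarly $(3,3,s)$ with $s \le 3$ forces $s = 3$. Wait — I dropped $r=3, s$ free too quickly; let me redo: with $q=3$, $r=3$, we need $1/s < 1 - 2/3 = 1/3$ so $s \ge 4$, but also $k \le 1/42$ needs $1/s \ge 13/42$, i.e. $s \le 3$, contradiction. So $q = 3$ is impossible under $k \le 1/42$ (combined with $k>0$ — indeed $r \ge 4$ is also excluded as shown). Hence $q = 2$. Then $1/r + 1/s \ge 41/42 - 1/2 = 20/42 = 10/21$, so $2/r \ge 10/21$, giving $r \le 21/5 < 5$, hence $r \in \{3,4\}$ (not $r=2$, as $(2,2,s)$ gives $k < 0$). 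If $r = 4$: $1/s \ge 10/21 - 1/4 = 19/84$, so $s \le 84/19 < 5$, and $k > 0$ needs $1/s < 1/4$ so $s \ge 5$, contradiction. So $r = 3$, $q = 2$, and then $1/s \ge 10/21 - 1/3 = 3/21 = 1/7$, so $s \le 7$; also $k > 0$ needs $1/s < 1/6$, so $s \ge 7$. Thus $s = 7$, giving $(q,r,s) = (2,3,7)$ with $k = 1 - 1/2 - 1/3 - 1/7 = 1/42$. This shows $1/42$ is both achievable and the minimum, attained uniquely.

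\textbf{Second part.} Now assume additionally two of $q,r,s$ are equal. I would run the same enumeration but under the hypothesis $k \le 1/12$ together with $k > 0$ and the constraint that (at least) two entries coincide. As before $3/q \ge 1 - 1/12 = 11/12$ gives $q \le 36/11 < 4$, so $q \in \{2,3\}$. The equal pair is among $\{q=r\}$, $\{r=s\}$, $\{q=r=s\}$. The case $q=r=s$ gives $k = 1 - 3/q$, which for $q=3$ is $0$ (excluded) and for $q \ge 4$ exceeds $1/12$; no good. If $q = 3$: $1/r + 1/s \ge 11/12 - 1/3 = 7/12$, so $2/r \ge 7/12$, $r \le 24/7 < 4$, so $r = 3$; then $q = r = 3$ (the equal pair), and $1/s \ge 7/12 - 1/3 = 1/4$, so $s \le 4$, while $k > 0$ needs $s > 3$, so $s = 4$: this is $(3,3,4)$ with $k = 1 - 1/3 - 1/3 - 1/4 = 1/12$. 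If $q = 2$: $1/r + 1/s \ge 11/12 - 1/2 = 5/12$, so $2/r \ge 5/12$, $r \le 24/5 < 5$, so $r \in \{2,3,4\}$. The equal pair must involve two equal values; since $q=2$, either $r = 2$ (then $(2,2,s)$: $k = 1/2 - 1/s > 0$ needs $s \ge 3$; but $k \le 1/12$ needs $1/s \ge 5/12$, $s \le 2$, contradiction) or $r = s$. With $q=2$ and $r = s \in \{3,4\}$: $(2,3,3)$ gives $k = 1 - 1/2 - 2/3 < 0$, excluded; $(2,4,4)$ gives $k = 1 - 1/2 - 1/2 = 0$, excluded. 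So the only admissible triple is $(3,3,4)$, and the minimum in this constrained case is $1/12$, attained uniquely.

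\textbf{Main obstacle.} There is no real obstacle — the only thing requiring care is bookkeeping: one must consistently combine the \emph{upper} bound $k \le$ (the claimed minimum) — used to force each variable to be small — with the \emph{strict} positivity $k > 0$ (equivalently $1/q+1/r+1/s < 1$) — used to force each variable to be large enough — so that the two squeeze each candidate variable into at most one value; and in the second part one must additionally track which pair of indices is the equal one across the sub-cases. I would present the argument as the two short nested case analyses above, checking the handful of surviving triples by direct substitution.
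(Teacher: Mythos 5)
Your argument is correct and takes essentially the same route as the paper's: bound $q$, then $r$, then $s$ by playing the positivity of $k$ against the assumed upper bound on $k$, and check the handful of surviving triples by direct substitution. The only blemish is a harmless slip in the $(2,2,s)$ subcase of the second part, where $k$ equals $-1/s$ rather than $1/2-1/s$ (so that case is excluded immediately by $k>0$); otherwise, once the exploratory digressions are stripped out, this is the paper's proof.
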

\begin{proof}
It is clear that for $(q,r,s)=(2,3,7)$, $k=\frac{1}{42}$.
By hypothesis \[\frac{1}{q}\geq\frac{1}{r}\geq \frac{1}{s}.\]
If $4\leq q\leq r\leq s$, then $k\geq\frac{1}{4}$. Thus $q$ can only be equal to 2 or 3.

If $q=2$, then $\frac{1}{r}+\frac{1}{s}<\frac{1}{2}$. Thus $3\leq r\leq s$. If $5\leq r \leq s$, then $k\geq \frac{1}{10}>\frac{1}{42}$, thus we only have to consider $r$ to be equal 3 or 4. In the case $r=3$ we have $\frac{1}{s}<\frac{1}{2}-\frac{1}{3}=\frac{1}{6}$, hence the smallest possible value or $s$ is 7 and we get the triple $(q,r,s)=(2,3,7)$. In the case $r=4$ we have $\frac{1}{s}<\frac{1}{2}-\frac{1}{4}=\frac{1}{4}$, hence the smallest possible value for $s$ is 5 we get the triple $(q,r,s)=(2,3,5)$.

If $q=3$, then $\frac{1}{r}+\frac{1}{s}<\frac{2}{3}$. Thus $3\leq r\leq s$. If $5\leq r \leq s$, then $k\geq \frac{4}{15}>\frac{1}{42}$, thus we only have to consider $r$ to be equal 3 or 4. In the case $r=3$ we have $\frac{1}{s}<\frac{2}{3}-\frac{1}{3}=\frac{1}{3}$, hence the smallest possible value for $s$ is 4 we get the triple $(q,r,s)=(3,3,4)$. In the case $r=4$ we have $\frac{1}{s}<\frac{2}{3}-\frac{1}{4}=\frac{5}{12}$, hence the smallest possible value for $s$ is 4 we get the triple $(q,r,s)=(2,4,4)$.

Summarizing we get as possible candidates for the triple $(q,r,s)$ the following triples $(2,3,7)$, $(2,4,5)$, $(3,3,4)$, and $(3,4,4)$. Now the conclusions follow from a straightforward computation.
\end{proof}

Now we are ready to verify the hypothesis of \cref{thm:aramayona:martinezperez}. For this we will distinguish two cases: when $O_F$ is non-orientable and when $O_F$ is orientable. This cases are natural to be considered in separate arguments due to the difference in nature of $\vcd(WF)$ depending on the orientability of the associated orbifold $O_F$ as proved in \cref{thm:vcd:weyl:groups}.

\section{Verifying the hypothesis of \cref{thm:aramayona:martinezperez} when $O_F$ is non-orientable }\label{section:non-orientable:case}

Let $F$ be a finite group of $\calN_g$ such that the orbifold  $O_F=N_g/F$ is non-orientable. As a consequence of \cref{thm:vcd:weyl:groups} and \cref{prop:vcd:mpcg}, we have 

\begin{equation}\label{vcd:WF:nonorientable}
\vcd(WF)=\vcd(\calNgn[b_c]{g_F}{e_F+b_m}) =
\begin{cases}
b_c & \text{if } g_F=1 \text{ and } e_F+b\leq 2, \\
e_F+b_m+2b_c-2 & \text{if } g_F=1 \text{ and } e_F+b\geq 3, \\
e_F+b_m+2b_c & \text{if } g_F=2, \\
2g_F-5 & \text{if } g_F\geq 3 \text{ and } e_F+b=0,\\
2g_F+e_F+b_m+2b_c-4 & \text{if } g_F\geq 3 \text{ and } e_F+b\geq 1.
\end{cases}
\end{equation}

\begin{theorem}\label{thm:vcd:nonorientable:lambda:inequality:easycases}
Let $F$ be a finite subgroup of $\calN_g$ such that  $O_F$ is non-orientable. Assume that $g_F\geq 2$. Then, for all $g\geq 4$
\[\vcd(WF)+\lambda(F)\leq \vcd(\calN_g).\]
\end{theorem}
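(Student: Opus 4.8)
The plan is to reduce the statement to \cref{prop:vcd:inequalities} (\ref{prop:vcd:inequalities:zero}), which already packages the arithmetic linking $\vcd(WF)$, $\lambda(F)$ and $\vcd(\calN_g)$ once one knows both that $\vcdNgoverF\geq\vcd(WF)$ (this is the case $\epsilon=0$ of its hypothesis) and that $\vcd(WF)\geq 1$. So, under the standing hypotheses that $O_F$ is non-orientable, $g_F\geq 2$, and $g\geq 4$, I would verify these two inequalities and then invoke that proposition.

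For $\vcd(WF)\geq 1$ I would argue from \eqref{vcd:WF:nonorientable}. If $g_F\geq 3$ it is immediate: either $\vcd(WF)=2g_F-5\geq 1$, or $\vcd(WF)=2g_F+e_F+b_m+2b_c-4\geq 2g_F-3\geq 3$. If $g_F=2$ then $\vcd(WF)=e_F+b_m+2b_c$, and I would rule out that this vanishes: since $g\geq 3$, the Riemann--Hurwitz formula \eqref{eq:riemann:hurwitz} gives $\chi(O_F)=\chi(N_g)/|F|<0$, so $O_F$ cannot be the closed Klein bottle and must carry either an elliptic point, a mirror boundary component, or a boundary component containing a corner point; recalling that $b_c\geq 1$ whenever $c_F\geq 1$, in each of these cases $e_F+b_m+2b_c\geq 1$.

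For $\vcdNgoverF\geq\vcd(WF)$ I would apply \cref{prop:vcd:geq:hurwitz} with $\alpha=2$, giving $\vcdNgoverF\geq 2g_F-4+e_F+2b_m+2b_c+\tfrac{c_F}{2}$, and compare this term by term with the appropriate line of \eqref{vcd:WF:nonorientable}. When $g_F=2$, or when $g_F\geq 3$ and $e_F+b\geq 1$, the difference is exactly $b_m+\tfrac{c_F}{2}\geq 0$; and in the remaining case $g_F\geq 3$, $e_F+b=0$, one has $e_F=b_m=b_c=c_F=0$, so the difference is $2g_F-4-(2g_F-5)=1\geq 0$. Hence the inequality holds in every case.

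Combining the two inequalities, \cref{prop:vcd:inequalities} (\ref{prop:vcd:inequalities:zero}) with $\epsilon=0$ yields $\vcd(WF)+\lambda(F)\leq\vcd(\calN_g)$ for all $g\geq 4$, which is the assertion. I do not expect a real obstacle here: the only delicate point is excluding $\vcd(WF)=0$ when $g_F=2$, which is exactly where the negativity of the orbifold Euler characteristic is used; the rest is a short case check against \eqref{vcd:WF:nonorientable} and \cref{prop:vcd:geq:hurwitz}, both already at our disposal.
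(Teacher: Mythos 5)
Your proposal is correct and follows essentially the same route as the paper: both reduce to \cref{prop:vcd:inequalities}~(\ref{prop:vcd:inequalities:zero}) by checking $\vcd(WF)\geq 1$ and comparing the Riemann--Hurwitz lower bound of \cref{prop:vcd:geq:hurwitz} against the relevant lines of \eqref{vcd:WF:nonorientable}, with the same case split on $g_F$ and the same use of $\chi(O_F)<0$ to exclude the closed Klein bottle quotient when $g_F=2$. No gaps.
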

\begin{proof}
First note that the statement is trivially true if $F=1$. From now on we will assume $F\neq 1$.

By \cref{prop:vcd:geq:hurwitz},
\[
\vcdNgoverF\geq 2 g_F-4+e_F+b_m+2b_c+b_m+\frac{c_F}{2}=(\star)
\]
Once we prove that $\vcd(WF)\geq 1$ and $(\star)\geq \vcd(WF)$, using \cref{prop:vcd:inequalities} (\ref{prop:vcd:inequalities:zero}),  the proof will be done.
We will proceed by cases.

\begin{description}
\item[Case 1: $g_F\geq3$ and $e_F+b\geq 1$] By \eqref{vcd:WF:nonorientable} we get $(\star)=\vcd(WF)+\frac{c_F}{2}+b_m\geq \vcd(WF)$ since $\frac{c_F}{2}+b_m\geq 0$. On the other hand it is clear from \eqref{vcd:WF:nonorientable} that $\vcd(WF)\geq 1$.

\item[Case 2: $g_F\geq3$ and $e_F+b=0$]  By \eqref{vcd:WF:nonorientable} we get $(\star)=2g_F-4\geq 2g_F-5=\vcd(WF)$. In this case $\vcd(WF)\geq 1$ since we are assuming $g_F\geq 3$.

\item[Case 3: $g_F=2$ and $e_F+b_m+2b_c\geq1$] By \eqref{vcd:WF:nonorientable} we get  we get $(\star)=\vcd(WF)+\frac{c_F}{2}+b_m\geq \vcd(WF)$ since $\frac{c_F}{2}+b_m \geq 0$. Clearly in this case $\vcd(WF)\geq 1$.

\item[Case 4: $g_F=2$ and $e_F+b_m+2b_c = 0$] This case is impossible. In fact, we would have $|F|\chi(N_g)=\chi(O_F)=0$, but the left hand side is positive as $g\geq 3$. \end{description}
\end{proof}

\begin{theorem}\label{thm:vcd:nonorientable:lambda:inequality:hardcases}
Let $F$ be a finite subgroup of $\calN_g$ such that  $O_F$ is non-orientable. Assume that $g_F= 1$. Then, for all $g\geq 5$
\[\vcd(WF)+\lambda(F)\leq \vcd(\calN_g).\]
\end{theorem}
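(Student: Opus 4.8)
The plan is to follow the pattern of \cref{thm:vcd:nonorientable:lambda:inequality:easycases}, splitting according to whether $\vcd(WF)\ge 2$ or $\vcd(WF)\le 1$. The case $F=1$ is trivial, so assume $F\neq 1$; recall that $\vcd(\calN_g)=2g-5$ by \cref{prop:vcd:mpcg}. Since $g_F=1$, equation \eqref{vcd:WF:nonorientable} reads $\vcd(WF)=b_c$ when $e_F+b\le 2$ and $\vcd(WF)=e_F+b_m+2b_c-2$ when $e_F+b\ge 3$, while \cref{prop:vcd:geq:hurwitz} with $\alpha=2$ gives $\vcdNgoverF\ge -2+e_F+2b_m+2b_c+\tfrac{c_F}{2}$.

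\emph{Case $\vcd(WF)\ge 2$.} Here I will show that $\vcdNgoverF\ge\vcd(WF)$; since $\vcd(WF)\ge 2\ge 1$, \cref{prop:vcd:inequalities}~(\ref{prop:vcd:inequalities:zero}) then closes the case for every $g\ge 4$, in particular for $g\ge 5$. If $e_F+b\ge 3$, subtracting the expression for $\vcd(WF)$ from the Hurwitz lower bound leaves $b_m+\tfrac{c_F}{2}\ge 0$. If $e_F+b\le 2$, then $b_c\le 2$, so $\vcd(WF)=b_c\ge 2$ forces $b_c=2$ and $e_F=b_m=0$; since each of the two boundary circles counted by $b_c$ carries a corner point, $c_F\ge 2$, and the Hurwitz bound gives $\vcdNgoverF\ge -2+4+1=3>2$.

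\emph{Case $\vcd(WF)\le 1$.} Now $\vcd(WF)+\lambda(F)\le 1+\lambda(F)\le 1+\log_2|F|$ by \cref{lemma:lambda:inequalities}, so it suffices to bound $|F|$. By the Riemann--Hurwitz formula \eqref{eq:riemann:hurwitz}, $|F|=\chi(N_g)/\chi(O_F)=(g-2)/(-\chi(O_F))$, so I need a lower bound for $-\chi(O_F)$. The conditions $\vcd(WF)\le 1$ and $g_F=1$ force, through the formula above, either $e_F+b\le 2$---so the underlying surface of $O_F$ is $\mathbb{RP}^2$, the M\"obius band, or $\mathbb{RP}^2$ with two holes, carrying at most two cone points---or $b_c=0$ with $e_F+b_m=3$. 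Going through these finitely many orbifold types, using that $\chi(\mathbb{RP}^2)=1$, that a cone point of order $q$ lowers $\chi$ by $1-\tfrac1q\ge\tfrac12$, an all-mirror boundary circle by $1$, a corner point by $\tfrac12(1-\tfrac1p)\ge\tfrac14$, and that $\chi(O_F)<0$, one finds $-\chi(O_F)\ge\tfrac16$, the minimum being attained only for $O_F=\mathbb{RP}^2$ with two cone points of orders $2$ and $3$. Hence $|F|\le 6(g-2)$.

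Combining these, $\vcd(WF)+\lambda(F)\le 1+\lfloor\log_2(6(g-2))\rfloor$, and since $6(g-2)<2^{2g-5}$ for every $g\ge 5$ (at $g=5$ this reads $18<32$, and the gap only widens thereafter), we conclude $\vcd(WF)+\lambda(F)\le 1+(2g-6)=2g-5=\vcd(\calN_g)$. The main obstacle is the bookkeeping in the second case: one must check that the enumeration of orbifold types is exhaustive and that the extremal value $-\chi(O_F)=\tfrac16$ is correctly identified, since this is precisely what forces the hypothesis $g\ge 5$---already for $g=4$ the method yields only $\vcd(WF)+\lambda(F)\le 4$ in the worst case ($\vcd(WF)=1$, $|F|=4(g-2)=8$), which does not suffice.
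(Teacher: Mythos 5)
Your proof is correct, and it ultimately hinges on the same two extremal configurations as the paper's argument (the quotient orbifold which is a projective plane with two cone points of orders $2$ and $3$, giving $-\chi(O_F)=\tfrac16$, and the one-boundary, one-corner-point quotient with $\vcd(WF)=1$, giving $-\chi(O_F)\ge\tfrac14$), but the organization is genuinely different. The paper splits according to the value of $e_F+b$ and, in each small subcase, runs a separate threshold argument of the form ``$\vcd(\calN_g)\ge |F|/n-1>\log_2|F|$ for $|F|$ above some bound, and below that bound use $\lambda(F)\le 3$ together with $g\ge 5$.'' You instead split on $\vcd(WF)\ge 2$ versus $\vcd(WF)\le 1$, and in the latter case prove one uniform estimate $-\chi(O_F)\ge \tfrac16$, hence $|F|\le 6(g-2)$, and make a single comparison with $2^{2g-5}$. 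I checked the two points you flag as the main risks: your enumeration is exhaustive (if $e_F+b\ge 3$ then $\vcd(WF)=e_F+b_m+2b_c-2\le 1$ does force $b_c=0$ and $e_F+b_m=3$, while if $e_F+b\le 2$ then $\vcd(WF)=b_c\le 1$ automatically unless $b_c=2$, which you put in the first case), and the minimum $\tfrac16$ is correctly identified, attained only at $(e_F,b)=(2,0)$ with cone orders $(2,3)$; every configuration with a boundary component gives $-\chi(O_F)\ge\tfrac14$. What your version buys is a cleaner, single-shot endgame that makes transparent exactly where $g\ge 5$ enters (your residual worst case $\vcd(WF)=1$, $|F|=8$ for $g=4$ is precisely the obstruction recorded in \cref{remark:exceptional1}); what the paper's version buys is that its estimates stay local to each orbifold type, which is what \cref{remark:exceptional1} then exploits to extract the bound $\vcd(WF)+\lambda(F)\le 4$ for $\calN_4$.
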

\begin{proof}
The statement is trivially true if $F=1$. From now on we will assume $F\neq 1$.
By \cref{prop:vcd:geq:hurwitz},
\[
\vcdNgoverF\geq  -2+e_F+b_m+2b_c+b_m+\frac{c_F}{2}=(\star)
\]

If $e_F+b\geq 3$, then by \eqref{vcd:WF:nonorientable}, we get $(\star)=-2+b+e_F+b_c+b_m+\frac{c_F}{2}\geq \vcd(WF)$ since $b_m+\frac{c_F}{2}\geq 0$. It is also clear that $\vcd(WF)\geq 1$. Therefore the conclusion follows in this case from \cref{prop:vcd:inequalities} \eqref{prop:vcd:inequalities:zero}.

Now we have to deal with the case $e_F+b\leq 2$. In this case $\vcd(WF)=b_c$ by \eqref{vcd:WF:nonorientable}. We will split this case into three subcases.

\begin{description}
    \item[Case 1: $g_F=1$ and $e_F+b=0$] This case is impossible. In fact, we would have $\chi(N_g)=|F|\chi(O_F)=|F|$, but the left hand side is negative as $g\geq 3$.

    \item[Case 2: $g_F=1$ and $e_F+b=2$] In this case either $b_c \neq 0$ or $b_c = 0$.
    
    Assume that $b_c \neq 0$. Then $(\star)=b_c+\frac{c_F}{2}+b_m\geq \vcd(WF) \geq 1$. Now the claim follows from \cref{prop:vcd:inequalities} \eqref{prop:vcd:inequalities:zero}. 
    
    Assume that $b_c = 0$, then we have that $\vcd(WF)=0$ and therefore we need to prove that 
    \[\vcd(\calN_g) \geq \lambda(F).\]
    
    If $b_m\neq 0$ then $e_F = 1 $ or $0$. In both cases, it follows from $(\star)$ that 
    \[
    \vcdNgoverF\geq  1
    \]
    and therefore
    \begin{align*}
        \vcd(\calN_g)\geq |F|-1 \geq  \lambda(F)
    \end{align*}
    
    If $b_m = 0$ then $e_F =2$. As $g \geq 3$, by \eqref{eq:riemann:hurwitz} we have at least one elliptic point with order greater than or equal to three. Then   
    \begin{align*}
    \vcdNgoverF = -2\chi(O_F) \geq \frac{1}{3}
   \end{align*}
    and therefore 
    \begin{align*}
        \vcd(\calN_g)\geq \frac{|F|}{3}-1 > \log_2(|F|) \geq \lambda(F)
    \end{align*}
    where the second inequality is true when $|F|>14$. Then $\lambda(F) \leq \vcd(\calN_g)$ when $|F|> 14$. If $|F| \leq 14$, then $\lambda(F)\leq 3$, thus $\lambda(F)\leq 3\leq \vcd(\calN_g)$ provided $g\geq 4$.

    \item[Case 3: $g_F=1$ and $e_F+b=1$] In this case either $e_F=0$ and $b=1$, or $e_F=1$ and $b=0$.
    
    Assume that $e_F=1$ and $b=0$. Then $\chi(O_F)=1-(1-\frac{1}{q})=\frac{1}{q}$. Therefore by \eqref{eq:riemann:hurwitz} we have $|F|\frac{1}{q}=2-g<0$ as $g\geq 4$. This is a contradiction, and this case is impossible.
    
    Assume that $e_F=0$ and $b=1$. We  have $2-g=|F|(-\frac{1}{2}C_F)$. Since $2-g<0$, then we conclude $C_F>0$ and therefore $c_F> 0$. This implies $\vcd(WF)=b_c=1$ and $b_m=0$. Next, note that $(\star)=\frac{c_F}{2}$. If $c_F\geq 2$, then $(\star)\geq 1=\vcd(WF)$, thus the claim follows from \cref{prop:vcd:inequalities} \eqref{prop:vcd:inequalities:zero}. If $c_F=1$, then 
    \begin{align*}
        \vcd(\calN_g)\geq \frac{|F|}{2}-1 > \log_2(|F|) \geq \lambda(F)
    \end{align*}
    where the first inequality comes from \cref{prop:vcd:geq:hurwitz}, the second one is true when $|F|> 8$  and the third one comes from \cref{lemma:lambda:inequalities}. We conclude that $\vcd(WF)+\lambda(F)=1+\lambda(F)\leq \vcd(\calN_G)$ when $|F|>8$. If $|F|\leq 8$, then $\lambda(F)\leq \log_2(|F|)\leq 3$, thus $\vcd(WF)+\lambda(F)\leq 4\leq \vcd(\calN_g)$ provided $g\geq 5$.
\end{description}
\end{proof}

\begin{remark}\label{remark:exceptional1}
Note that, in the proof of the previous theorem, the case $g=4$ was excluded only at the end of Case 3. Moreover, the only possible problem comes from the existence of a group $F$ of order 8 and with $\vcd(WF)=1$. In conclusion, we get $\vcd(WF)+\lambda(F)\leq 4=\vcd(\calN_4)+1$ for all finite subgroup $F$ of $\calN_4$ with $O_F$ non-orientable.
\end{remark}
\section{Verifying the hypothesis of \cref{thm:aramayona:martinezperez} when $O_F$ is orientable }\label{section:orientable:case}

Let $F$ be a finite subgroup of $\calN_g$ such that  $O_F$ is orientable. As a consequence of \cref{thm:vcd:weyl:groups} and \cref{prop:vcd:mpcg}, we have 

\begin{equation}\label{vcd:WF:orientable}
\vcd(WF)=\vcd(\calSgn[b_c]{g_F}{e_F+b_m}) =
\begin{cases}
b_c & \text{if } g_F=0 \text{ and } e_F+b\leq 2, \\
e_F+b_m+2b_c-3 & \text{if } g_F=0 \text{ and } e_F+b\geq 3, \\
1+b_c & \text{if } g_F=1 \text{ and } e_F+b= 0, \\
e_F+b_m+2b_c & \text{if } g_F=1 \text{ and } e_F+b\geq 1, \\
4g_F-5 & \text{if } g_F\geq 2 \text{ and } e_F+b=0,\\
4g_F+e_F+b_m+2b_c-4 & \text{if } g_F\geq 2 \text{ and } e_F+b\geq 1.
\end{cases}
\end{equation}

The proof of the following result is completely analogous to the proof of \cref{thm:vcd:nonorientable:lambda:inequality:easycases}, the details are left to the reader.

\begin{theorem}\label{thm:vcd:orientable:lambda:inequality:easycases}
Let $F$ be a finite subgroup of $\calN_g$ such that  $O_F$ is orientable. Assume that $g_F\geq 1$. Then, for all $g\geq 4$
\[\vcd(WF)+\lambda(F)\leq \vcd(\calN_g).\]
\end{theorem}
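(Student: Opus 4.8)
The plan is to mirror the case analysis of \cref{thm:vcd:nonorientable:lambda:inequality:easycases}, replacing the use of \eqref{vcd:WF:nonorientable} by \eqref{vcd:WF:orientable} and the constant $\alpha=2$ by $\alpha=4$ in the Hurwitz lower bound. As before, the statement is trivial for $F=1$, so assume $F\neq1$. By \cref{prop:vcd:geq:hurwitz}, since $O_F$ is orientable,
\[
\vcdNgoverF\geq 4g_F-4+e_F+b_m+2b_c+b_m+\frac{c_F}{2}=(\star).
\]
The strategy is: once we show $\vcd(WF)\geq 1$ and $(\star)\geq\vcd(WF)$, the conclusion follows from \cref{prop:vcd:inequalities} \eqref{prop:vcd:inequalities:zero} (valid for $g\geq4$). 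Note that the hypothesis $g_F\geq 1$ is what makes $\vcd(WF)\geq 1$ easy to guarantee in all subcases, exactly as $g_F\geq 2$ did in the non-orientable statement.

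I would split into the following cases according to \eqref{vcd:WF:orientable}. \textbf{Case $g_F\geq 2$, $e_F+b\geq 1$:} here $\vcd(WF)=4g_F+e_F+b_m+2b_c-4$, so $(\star)=\vcd(WF)+b_m+\frac{c_F}{2}\geq\vcd(WF)$, and clearly $\vcd(WF)\geq 1$. \textbf{Case $g_F\geq 2$, $e_F+b=0$:} here $\vcd(WF)=4g_F-5$ and $(\star)=4g_F-4\geq\vcd(WF)$, with $\vcd(WF)\geq 3\geq 1$. \textbf{Case $g_F=1$, $e_F+b\geq 1$:} here $\vcd(WF)=e_F+b_m+2b_c$ and $(\star)=4-4+e_F+b_m+2b_c+b_m+\frac{c_F}{2}=\vcd(WF)+b_m+\frac{c_F}{2}\geq\vcd(WF)$; moreover $\vcd(WF)\geq 1$ since $e_F+b\geq 1$ forces at least one of $e_F,b_m,b_c$ to be positive. \textbf{Case $g_F=1$, $e_F+b=0$:} this is impossible: the Riemann–Hurwitz formula \eqref{eq:riemann:hurwitz} would give $\chi(N_g)=|F|\chi(O_F)=|F|\cdot 0=0$, contradicting $g\geq 3$ (so $\chi(N_g)<0$).

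In each surviving case the two needed inequalities $\vcd(WF)\geq 1$ and $(\star)\geq\vcd(WF)$ hold, so \cref{prop:vcd:inequalities} \eqref{prop:vcd:inequalities:zero} yields $\vcd(WF)+\lambda(F)\leq\vcd(\calN_g)$ for all $g\geq 4$. The only mild subtlety—the ``main obstacle,'' though it is minor—is bookkeeping the definition of $\vcd(WF)$ from \eqref{vcd:WF:orientable} correctly against the exact value of $(\star)$, in particular checking that the $+2b_c$ in the Weyl-group formula is matched by the $+2b_c$ coming from the $\alpha=4$, orientable version of \cref{prop:vcd:geq:hurwitz}, and that the $g_F=1$ subcase with $e_F+b=0$ is genuinely vacuous rather than requiring a separate argument (contrast with the non-orientable $g_F=2$, $e_F+b=0$ case, which is likewise impossible for the same Euler-characteristic reason). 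Since the surface sphere case $g_F=0$ is excluded by the hypothesis $g_F\geq 1$, there is nothing further to treat, and the proof is complete.
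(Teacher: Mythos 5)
Your proof is correct and is precisely the argument the paper intends: the paper states that the proof is ``completely analogous'' to that of \cref{thm:vcd:nonorientable:lambda:inequality:easycases} and leaves the details to the reader, and you have filled in exactly those details (the $\alpha=4$ Hurwitz bound, the case split according to \eqref{vcd:WF:orientable}, and the observation that $g_F=1$, $e_F+b=0$ is ruled out by Riemann--Hurwitz) correctly.
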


\begin{theorem}\label{thm:vcd:orientable:lambda:inequality:hardcases1}
Let $F$ be a finite subgroup of $\calN_g$ such that  $O_F$ is orientable. Assume that $g_F= 0$ and $e_F+b\geq 3$. Then, for all $g\geq 5$
\[\vcd(WF)+\lambda(F)\leq \vcd(\calN_g).\]
\end{theorem}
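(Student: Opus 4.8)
The case at hand is $g_F = 0$ with $e_F + b \geq 3$, so by \eqref{vcd:WF:orientable} we have $\vcd(WF) = e_F + b_m + 2b_c - 3$. The strategy is the same as in the non-orientable hard cases: I would invoke \cref{prop:vcd:geq:hurwitz} with $\alpha = 4$ (since $O_F$ is orientable) together with $g_F = 0$ to get
\[
\vcdNgoverF \geq -4 + e_F + b_m + 2b_c + b_m + \frac{c_F}{2} = (\star).
\]
Comparing with $\vcd(WF)$, one sees $(\star) - \vcd(WF) = b_m + \frac{c_F}{2} - 1$, which is nonnegative as soon as $b_m + c_F \geq 2$. So the ``generic'' subcase is $b_m + c_F \geq 2$: there $(\star) \geq \vcd(WF)$, and one checks $\vcd(WF) \geq 1$ is automatic from $e_F + b \geq 3$ (since $e_F + b_m + 2b_c \geq 3$, whence $e_F + b_m + 2b_c - 3 \geq 0$; the cases making it exactly $0$ need a separate glance, see below). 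When $\vcd(WF) \geq 1$, \cref{prop:vcd:inequalities} \eqref{prop:vcd:inequalities:zero} with $\epsilon = 0$ closes the argument for all $g \geq 4$ (in particular $g \geq 5$).

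**The remaining small configurations.** The subcase $b_m + c_F \leq 1$ is where the real work sits. If $b_m + c_F = 0$ then $b = b_c = 0$, so $e_F \geq 3$ and $\vcd(WF) = e_F - 3$; here $(\star) = e_F - 4$, which is one short of $\vcd(WF)$, so I would instead use the Riemann--Hurwitz equation directly: $\vcdNgoverF = -4 + E_F$ where $E_F = \sum (1 - 1/q_i) \geq e_F/2$. For $e_F = 3$ one has $\vcd(WF) = 0$, so it suffices to bound $\lambda(F) \leq \vcd(\calN_g) = 2g-5$; \cref{lemma:smallest:possibilities} gives $E_F - 1 = -1 + (1 - 1/q) + (1 - 1/r) + (1 - 1/s) \geq \frac{1}{42}$, hence $\vcd(\calN_g) \geq \frac{|F|}{42} - 1$, which beats $\log_2|F| \geq \lambda(F)$ for $|F|$ large, and the finitely many small $|F|$ are handled by $\lambda(F) \leq \log_2|F|$ together with $g \geq 5$ (so $\vcd(\calN_g) \geq 5$). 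For $e_F = 4$ one has $\vcd(WF) = 1$ and $(\star) = 0$, but $E_F \geq 2$ whenever $g \geq 3$ forces at least two elliptic orders $\geq 3$ (a pure rotation orbifold with $\chi < 0$ cannot have all orders $2$), giving $\vcdNgoverF \geq 2/3$ or so — actually one needs the sharper statement that four orders-$\geq 2$ summing appropriately with $\chi(O_F) < 0$ force $E_F$ bounded away from $1$; I would extract the needed numeric gap and again compare $\frac{|F|}{c} - 1$ against $\log_2|F|$. For $e_F \geq 5$, $(\star) = e_F - 4 \geq \vcd(WF) = e_F - 3 - 1$, wait — here $(\star) = \vcd(WF)$ exactly is false; rather $\vcd(WF) = e_F - 3$ and $(\star) = e_F - 4$, still one short, but now $E_F \geq e_F/2 \geq 5/2$ so $\vcdNgoverF \geq e_F - 4 + (E_F - e_F/2) \geq \vcd(WF) - 1 + \text{something}$; I would instead note $E_F \geq (5/2)(1 - 1/q_{\max})$-type bounds don't immediately help, and fall back on: since all but at most finitely many configurations have some order $\geq 3$, $E_F \geq e_F/2 + 1/6$, giving $(\star) + \text{slack}$, or else handle the all-orders-$2$ case (which forces $e_F$ large and $|F|$ correspondingly large) by the logarithmic bound. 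The subcase $b_m + c_F = 1$ splits into $(b_m, c_F) = (1,0)$ and $(0,1)$, each analysed the same way, noting $c_F = 1$ is impossible on an orientable closed-or-bounded surface only if $b_c = 0$, so one has $b_c \geq 1$ and $\vcd(WF) \geq 2$.

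**The main obstacle.** The genuine difficulty is the ``all elliptic points of order $2$'' degenerate configurations (and their corner-reflector analogues), where the Hurwitz-derived lower bound on $\vcdNgoverF$ degrades to exactly $e_F/2 - \text{const}$ and the clean comparison $(\star) \geq \vcd(WF)$ fails by one. In those cases one must abandon \cref{prop:vcd:inequalities} and argue directly that $|F|$ is forced to be large enough (because $\chi(O_F) = -(e_F/2 - 2)$-type expressions must divide $\chi(N_g)$ with $|F|$ as the quotient, and $\chi(O_F)$ is then a bounded-denominator rational bounded away from $0$), so that the logarithmic bound $\lambda(F) \leq \log_2|F|$ of \cref{lemma:lambda:inequalities} combined with $\vcd(\calN_g) \geq \frac{|F|}{c} - 1$ wins; the finitely many residual small groups are dispatched by $\lambda(F) \leq 3$ and $g \geq 5 \Rightarrow \vcd(\calN_g) \geq 5$ (here the hypothesis $g \geq 5$ rather than $g \geq 4$ is exactly what is needed, paralleling \cref{remark:exceptional1}). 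I would organize the whole proof as a short ``generic'' paragraph plus a \texttt{description} list of the handful of exceptional $(e_F, b_m, b_c, c_F)$ profiles, exactly mirroring the structure of \cref{thm:vcd:nonorientable:lambda:inequality:hardcases}.
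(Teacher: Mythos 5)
Your overall strategy is the paper's: bound $\vcdNgoverF$ from below via Riemann--Hurwitz, compare with $\vcd(WF)=e_F+b_m+2b_c-3$, feed the deficit into \cref{prop:vcd:inequalities}, and for the degenerate profiles force $|F|$ to be large and use $\lambda(F)\le\log_2|F|$ plus a finite check. However, two of the hard subcases are not actually closed. First, when $b_m=0$ and $c_F=1$ you assert $b_c\ge 1$ and \emph{hence} $\vcd(WF)\ge 2$; that implication is false. Here $b=b_c=1$ and $e_F+b\ge3$ only give $e_F\ge2$, so $\vcd(WF)=e_F-1$ can equal $1$. The paper's Case 1(a) devotes a separate argument to exactly this profile: the Hurwitz bound only yields $(\star)=\vcd(WF)-\tfrac12$, so one applies \cref{prop:vcd:inequalities}~(\ref{lemma:ab:inequality:onehalf}) when $\vcd(WF)\ge2$, and when $\vcd(WF)=1$ one compares $\vcd(\calN_g)\ge\tfrac{|F|}{2}-1>\log_2|F|$ for $|F|>8$ and falls back on $\lambda(F)\le3$ together with $g\ge5$ for $|F|\le8$ --- this is one of the places where the hypothesis $g\ge5$ is genuinely consumed, and your proposal skips it entirely.

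Second, in the profile $b=c_F=0$, where $\vcd(WF)=e_F-3$, your treatment of $e_F\ge5$ (and partly $e_F=4$) trails off without a conclusion (``bounds don't immediately help'', ``I would extract the needed numeric gap''). The clean resolution, which is the paper's Case 1(b), is to note that $(\star)=e_F-4=\vcd(WF)-1$ and invoke \cref{prop:vcd:inequalities}~(\ref{prop:vcd:inequalities:one}) with $\epsilon=1$; this settles every case with $\vcd(WF)\ge2$, i.e. $e_F\ge5$, for $g\ge5$, and only $e_F=4$ then needs the explicit computation that some $q_i\ge3$ forces $\vcdNgoverF\ge\tfrac13$ (a single order $\ge3$ suffices here, not two as you claim). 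Smaller issues: the exact Hurwitz identity in this situation is $\vcdNgoverF=-4+2b_m+2E_F$, not $-4+E_F$ (so your numeric thresholds would all need revisiting); and your case split (``generic'' meaning $b_m+c_F\ge2$ with $\vcd(WF)\ge1$, ``small'' meaning $b_m+c_F\le1$) lets the profiles with $\vcd(WF)=0$ and $b_m\ge2$, namely $(e_F,b_m)=(1,2)$ and $(0,3)$, fall through the cracks, although those are easy to dispatch directly as in the paper's Case 2.
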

\begin{proof}
We may assume that $F$ is non trivial. 

We distinguish two cases, when $\vcd(WF)\geq 1$ and when $\vcd(WF)=0$.

\textbf{Case 1:} Suppose $\vcd(WF)\geq 1$.  By Proposition \ref{prop:vcd:geq:hurwitz}, we have 
\begin{align}
\vcdNgoverF&\geq -4+e_F+2b_m+2b_c+\frac{c_F}{2}  \label{eqwf1} \\
& = \vcd(WF) +b_m +\frac{c_F}{2}-1. \label{eqwf2}
\end{align}
Note that if $b_m\geq 1$ or $c_F\geq 2$, then $(\ref{eqwf2})$ is greater than or equal to $\vcd(WF)$. Using Proposition \ref{prop:vcd:inequalities} (1), we obtain, in this situation, the result for all $g\geq 4$. It remains to consider the cases: $(b_m, c_F)=(0,1)$ and $(b_m, c_F)=(0,0)$.

\textbf{Case 1 (a): } If $b_m=0$ and $c_F=1$, then $b=b_c=1$  and by hypothesis we have that $e_F\geq 2$. Note that $\vcd(WF)=e_F-1$, then 
\begin{align*}
 -4+e_F+2b_m+2b_c+\frac{c_F}{2}  &= -4+e_F+2+\frac{1}{2}\\
 &=e_F-\frac{3}{2}\\
 &=\vcd(WF) -\frac{1}{2}
\end{align*}
 from (\ref{eqwf1}) and  Proposition \ref{prop:vcd:inequalities} (2)  we conclude that $\vcd(WF)+\lambda(F)\leq \vcd(\calN_g)$ when $\vcd(WF)\geq 2$. On the other hand, if $\vcd(WF)=1$, then  $\vcd(WF)-\frac{1}{2}=\frac{1}{2}$, by (\ref{eqwf1}) and the above equality we have that 
 \begin{align}\label{eq3}
 \vcd(\calN_g)\geq \frac{1}{2}|F|-1.
  \end{align}   
 By Lemma \ref{lemma:lambda:inequalities} we have  
 \begin{align} \label{eq4b}
  \lambda(F)&\leq\log_2(|F|)  < \frac{1}{2}|F|-1
 \end{align}
where the last inequality holds when $|F|>8$. Combining (\ref{eq3}) and (\ref{eq4b}) we conclude that $\lambda(F)+\vcd(WF)\leq \vcd(\calN_g)$ when $|F|>8$. Finally, if $|F|\leq 8$, then $\lambda(F)\leq 3$, therefore $\vcd(WF)+\lambda(F)\leq 4 \leq \vcd(\calN_g)$ since $g\geq 5$. 
 
\textbf{Case 1 (b):}  If $b_m=0$  and $c_F=0$, then $b_c=0$, therefore $\vcd(WF)=e_F-3$. By Proposition \ref{prop:vcd:geq:hurwitz}, we have 
\begin{align*}
\vcdNgoverF&\geq -4+e_F+2b_m+2b_c+\frac{c_F}{2}  \label{eqwf1} \\
& =-4+e_F\\
&=\vcd(WF)-1.
\end{align*}
Thus by Proposition \ref{prop:vcd:inequalities} (3)  we conclude that $\lambda(F)+\vcd(WF)\leq \vcd(\calN_g)$ when $\vcd(WF)\geq 2$. Now suppose $\vcd(WF)=1$, then $e_F=4$.  If $q_i=2$  for $i=1,...,4$, where the $q_i$'s are the order of the elliptic points of $O_F$, then $-\chi(O_F)=-2+E_F=-2+\sum_{i=1}^4 (1-\frac{1}{q_i})=0$, which is not possible by \eqref{eq:riemann:hurwitz}.  In case that some $q_i\neq 2$, we have 
\begin{align*}
\vcdNgoverF &= -4 +2E_F\\
&= -4+2 \sum_{i=1}^4 \left(1-\frac{1}{q_i}\right)\\
&\geq -4 +2 \left(\frac{1}{2}+\frac{1}{2}+\frac{1}{2}+\frac{2}{3}\right)\\
&= \frac{1}{3},  
\end{align*}
 thus we have that $\lambda(F)\leq \log_2(|F|)<\frac{|F|}{3}-1\leq \vcd(\calN_g)$, where the second inequality holds when $|F|>14$. In case $|F|\leq 14$, we have that $\lambda(F)\leq 3$, therefore $\lambda(F)+\vcd(WF)\leq 4\leq \vcd(\calN_g)$ since $g\geq 5$.
 
 \textbf{Case 2:} If $\vcd(WF)=e_F+b_m+2b_c-3=0$, then $e_F+b_m+2b_c=3$, but $e_F+b\geq 3 $ by hypothesis, therefore $b_c=0$, which implies $c_F=0$.   Thus 
\begin{equation}\label{eqbmf}
 \vcdNgoverF =-4+2b_m+E_F 
\end{equation}
 
 Since $e_F+b_m=3$, below we will explore all possibilities for $e_F$ and $b_m$. 
 
 Note that the cases $(e_F,b_m)=(3,0)$  and $(e_F,b_m)=(2,1)$ are not possible because (\ref{eqbmf}) would be  negative. 
 
 If $e_F=1$ and $b_m=2$, then 
 \[
 -4+2b_m+E_F=E_F=1-\frac{1}{p}\geq\frac{1}{2}
 \]
 using (\ref{eqbmf})  and if $|F|\geq 8$, we have 
 \[
 \lambda(F)\leq \log_2(|F|)\leq  \frac{|F|}{2}-1\leq \vcd(\calN_g), 
 \]
 If $|F|<8$, then  $\lambda(F)\leq 3\leq \vcd(\calN_g)$ as $g\geq 4$.  
 
 Finally, if $e_F=0$ and $b_m=3$, using (\ref{eqbmf}) we obtain 
 \[ \lambda(F)\leq |F|\leq 2|F|-1=\vcd(\calN_g).
 \]

\end{proof}

\begin{remark}\label{remark:exceptional2}
Note that the previous theorem is valid when $g\geq 5$. The case $g=4$ was ruled  out in the following situations:

\begin{itemize}
    \item In Case 1(a) when we have $\vcd(WF)=1$ and $|F|=8$. This in this case $\vcd(WF)+\lambda(F)=4=\vcd(\calN_4)+1$.
    
    \item In Case 1(b) when we use \cref{prop:vcd:inequalities} \eqref{prop:vcd:inequalities:one}, and when $\vcd(WF)=1$ and $|F|\geq 14$. In the first situation we can use the moreover part of \cref{prop:vcd:inequalities} \eqref{prop:vcd:inequalities:one} to conclude $\vcd(WF)+\lambda(F)=4=\vcd(\calN_4)+1$, while in the second situation we have $\lambda(F)\leq 3$ hence we also have $\vcd(WF)+\lambda(F)=4=\vcd(\calN_4)+1$. 
\end{itemize}

\end{remark}
\begin{theorem}\label{thm:vcd:orientable:lambda:inequality:hardcases2}
Let $F$ be a finite subgroup of $\calN_g$ such that  $O_F$ is orientable. Assume that $g_F= 0$ and $e_F+b\leq 2$. Then $\vcd(WF)\geq 1$, and for all $g\geq 7$
\[\vcd(WF)+\lambda(F)\leq \vcd(\calN_g).\]
\end{theorem}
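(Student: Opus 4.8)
The plan is to first establish $\vcd(WF)\geq 1$, then to bound $|F|$ from above by a constant multiple of $g-2$ via the Riemann--Hurwitz formula, and finally to deduce the desired inequality from the elementary estimates $\lambda(F)\leq\log_2|F|$ and $\lambda(F)\leq(\text{number of prime factors of }|F|\text{ counted with multiplicity})$ established in the proof of \cref{lemma:lambda:inequalities}. For the first assertion, note that by \eqref{vcd:WF:orientable} we have $\vcd(WF)=b_c$ under the present hypotheses, so it suffices to prove $b_c\geq 1$. Since $g\geq 3$ we have $\chi(N_g)=2-g<0$, hence $\chi(O_F)=\chi(N_g)/|F|<0$ by \eqref{eq:riemann:hurwitz}. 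Running through the possibilities for $(e_F,b)$ allowed by $g_F=0$ and $e_F+b\leq 2$: if $b=0$ then $C_F=0$ and $\chi(O_F)=2-E_F\geq 2-e_F\geq 0$ by \eqref{eq:inequalities:eF:cF}, which is impossible; so $(e_F,b)\in\{(0,1),(0,2),(1,1)\}$. In each of these cases $S_F$ is a disc or an annulus, so $\chi(S_F)-E_F\geq 0$, and therefore $\chi(O_F)=\chi(S_F)-\tfrac{C_F}{2}-E_F<0$ forces $C_F>0$, i.e.\ $c_F\geq 1$; hence some boundary component carries a corner reflector, so $b_c\geq 1$.

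For the main inequality, observe that $\vcd(WF)=b_c\leq b\leq e_F+b\leq 2$, and split according to the value of $b_c$. If $b_c=2$ then necessarily $(e_F,b)=(0,2)$ with $b_m=0$, so $O_F$ is an annulus bearing $c_F\geq 2$ corner reflectors; then $C_F\geq c_F/2\geq 1$ by \eqref{eq:inequalities:eF:cF} and $|F|=2(g-2)/C_F\leq 2(g-2)$ by \eqref{eq:riemann:hurwitz}, whence $\vcd(WF)+\lambda(F)\leq 2+\log_2\bigl(2(g-2)\bigr)\leq 2g-5$ for all $g\geq 7$. If $b_c=1$ we must instead show $\lambda(F)\leq 2g-6$, and for this it suffices to bound $|F|$; by \cref{lemma:smallest:possibilities}, applied to the orders of the three corner reflectors when $c_F=3$, together with short direct computations in the remaining configurations, the smallest value $-\chi(O_F)$ can take is $\tfrac1{84}$, attained exactly when $O_F$ is the disc with three corner reflectors of orders $2,3,7$.

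In that extremal configuration $|F|=84(g-2)=2^2\cdot 3\cdot 7\cdot(g-2)$, so $\lambda(F)$ is bounded by the number of prime factors of $|F|$, giving $\lambda(F)\leq 4+\log_2(g-2)\leq 2g-6$ for $g\geq 7$. In every other configuration one checks $-\chi(O_F)\geq\tfrac1{48}$: for instance $-\chi(O_F)\geq\tfrac1{48}$ when $c_F=3$ with corner orders other than $(2,3,7)$, $\geq\tfrac1{24}$ when $(e_F,b)=(1,1)$, $\geq\tfrac1{12}$ when $c_F\geq 4$, and $\geq\tfrac14$ for an annulus with $b_c=1$. In all these cases $|F|\leq 48(g-2)$ and hence $\lambda(F)\leq\log_2\bigl(48(g-2)\bigr)\leq 2g-6$ for $g\geq 7$. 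Combining the two cases proves $\vcd(WF)+\lambda(F)\leq 2g-5=\vcd(\calN_g)$.

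The delicate point is precisely this sharp Riemann--Hurwitz estimate: the $(2,3,7)$-triangle reflection orbifold makes $|F|$ as large as $84(g-2)$, and this is exactly what pins $g=7$ as the threshold in the statement. The case $g=7$ survives only because $84=2^2\cdot 3\cdot 7$ has merely four prime factors, so that the bound $\lambda(F)\leq(\text{number of prime factors of }|F|)$ is strong enough there, whereas the cruder $\lambda(F)\leq\log_2|F|$ would not be; identifying this extremal orbifold and verifying the $-\chi(O_F)\geq\tfrac1{48}$ gap in all other configurations is the bulk of the work.
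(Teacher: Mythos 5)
Your proof is correct and follows essentially the same route as the paper: both establish $\vcd(WF)=b_c\geq 1$ from Riemann--Hurwitz, run a case analysis over the possible signatures of the genus-zero quotient orbifold, use \cref{lemma:smallest:possibilities} to identify the $(2,3,7)$ reflection disc with $-\chi(O_F)=\tfrac{1}{84}$ as the extremal configuration, and close with the bounds $\lambda(F)\leq\log_2|F|$ and $\lambda(F)\leq(\text{number of prime factors of }|F|)$. The only differences are organizational: you split by $b_c\in\{1,2\}$ instead of by $e_F+b\in\{1,2\}$ and bound $|F|$ by an explicit multiple of $g-2$ throughout, whereas the paper handles each subcase by comparing $|F|$ against an absolute threshold and bounding $\lambda(F)$ by a constant below it.
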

\begin{proof}
The statement is trivially true if $F=1$. From now on we will assume $F\neq 1$. In this case 
\[\vcdNgoverF = 2b-4+C_F+2E_F = (\star) \]
If $g>3$,  from \eqref{eq:riemann:hurwitz} and \eqref{eq:inequalities:eF:cF}  we get 
\[ b+e_F +\frac{c_F}{2} \geq b+E_F+\frac{1}{2}C_F > 2.\]
It follows that $e_F+b\neq 0$, and even more as $e_F+b\leq 2$ we have  $c_F\geq 1$, in particular $\vcd(WF)=b_c\geq 1$. Then, it remains to deal with the cases $e_F+b=1$ and $e_F+b=2$. In most of our cases, the proof will be reduced to verify for which values of $|F|$ the first and second inequalities in the following chain
\begin{equation}\label{eq:vcd:log:inequalities}
\vcd(\calN_g)\geq \frac{|F|}{n}-1 > \log_2(|F|) \geq \lambda(F)
\end{equation}
are true 
for certain $n\in\dbN$. The third inequality is always true by \cref{lemma:lambda:inequalities}.
\begin{description}
    \item[Case 1: $e_F+b=1$] As $c_F\geq 1$, we have $\vcd(WF)=b_c=1$ and $e_F+b_m=0$. By \eqref{eq:riemann:hurwitz} we have
    \[ -1+\frac{1}{2}C_F>0\]
    and therefore by \eqref{eq:inequalities:eF:cF} we get $c_F \geq C_F>2$.
    
    If $c_F=3$, then
    \[0<(\star)=-2+C_F=-1+\frac{1}{p_1}+\frac{1}{p_2}+\frac{1}{p_3}\]
    where $p_1$, $p_2$, $p_3$ are the orders of the elliptic points. By \cref{lemma:smallest:possibilities} we get  $(\star)\geq \frac{1}{42}$. Therefore we obtain \eqref{eq:vcd:log:inequalities} with $n=42$ which  is true when $|F|> 405$. We conclude that $\vcd(WF)+\lambda(F)=1+\lambda(F)\leq \vcd(\calN_G)$ when $|F|>405$. If $|F|\leq 405$, then $\lambda(F)\leq \log_2(|F|)< 9$, thus $\vcd(WF)+\lambda(F)\leq 9\leq \vcd(\calN_g)$ provided $g\geq 7$.
    
    If $c_F=4$,then
    \[0<(\star)=-2+C_F=-2+\frac{1}{p_1}+\frac{1}{p_2}+\frac{1}{p_3}+\frac{1}{p_4}\]
    where $p_1$, $p_2$, $p_3$, $p_4$ are the orders of the elliptic points. Since $p_i\geq 2$ and $p_1=p_2=p_3=p_4=2$ is impossible, the smallest possible value for $-2+C_F$ in this case is reached when $p_1=p_2=p_3=2$ and $p_4=3$, Hence $C_F\geq \frac{13}{6}$. Then $(\star)\geq \frac{1}{6}$. Therefore  we obtain \eqref{eq:vcd:log:inequalities} with $n=6$ which  is true when $|F|> 37$. Therefore $\vcd(WF)+\lambda(F)=1+\lambda(F)\leq \vcd(\calN_g)$ when $|F|>37$. If $|F|\leq 37$ we have $\lambda(F)\leq \log_2(|F|)< 6$, thus $\vcd(WF)+\lambda(F)\leq 6\leq \vcd(\calN_g)$ provided $g\geq 6$.
    
    If $c_F=5$, using an argument very similar to that in the previous paragraph we have that $C_F\geq \frac{5}{2}$  since the smallest possible value for $-3+C_F$ is reached when $p_1=\cdots=p_5=2$. Thus $(\star)\geq \frac{1}{2}$. Therefore we obtain  \eqref{eq:vcd:log:inequalities} with $n=2$ which is true when $|F|> 8$. We conclude that $\vcd(WF)+\lambda(F)=1+\lambda(F)\leq \vcd(\calN_g)$ when $|F|>8$. If $|F|\leq 8$, then $\lambda(F)\leq \log_2(|F|)\leq 3$, thus $\vcd(WF)+\lambda(F)\leq 4\leq \vcd(\calN_g)$ provided $g\geq 5$.
    
    If $c_F\geq 6$, by \cref{prop:vcd:geq:hurwitz} we have $(\star) \geq  1 = \vcd(WF)$, then the claim follows from \cref{prop:vcd:inequalities} \eqref{prop:vcd:inequalities:zero}.

    \item[Case 2: $e_F+b=2$] As $c_F \geq 1$ we have two possibilities $b=2$ or $b=1$.
    
    If $b=1$, as $b_c\geq 1$ we have that $\vcd(WF)=1$, $b_m=0$ and $e_F=1$, even more, from \eqref{eq:riemann:hurwitz} we have that
    \begin{equation}\label{eq:riemann:hurwitz:gf=0:aux}
    E_F +\frac{1}{2}C_F > 1
    \end{equation}
    Now we have different cases depending on $c_F$. If $c_F = 1$, then
    \[
    (\star)=1-\frac{1}{p}-\frac{2}{q}
    \]
    for $p,q\geq 2$ natural numbers. By \cref{lemma:smallest:possibilities}, we have $(\star)\geq \frac{1}{12}$. Therefore we have  \eqref{eq:vcd:log:inequalities} with $n=12$ which implies $1+\lambda(F) = \vcd(WF)+\lambda(F)\leq \vcd(\calN_g)$ provided $g\geq 6$. Similarly if $c_F=2$ or $c_F=3$, we can easily see that $(\star)\geq \frac{1}{6}$ or $(\star) \geq \frac{1}{2}$. Therefore we have  \eqref{eq:vcd:log:inequalities} with $n=6$ and $n=2$ respectively, then we have $1+\lambda(F) = \vcd(WF)+\lambda(F)\leq \vcd(\calN_g)$ provided $g\geq 6$. Finally if $c_F\geq 4$, from \cref{prop:vcd:geq:hurwitz} we have $(\star)\geq\frac{1}{2}c_F -1\geq 1= \vcd(WF)$, then the claim follows from \cref{prop:vcd:inequalities} \eqref{prop:vcd:inequalities:zero}.    
    
    If $b=2$, then $e_F = 0$ and from \eqref{eq:inequalities:eF:cF} we have $(\star)=C_F \geq \frac{c_F}{2}$. If $c_F=1$, then $\vcd(WF)=1$ and $(\star)\geq\frac{1}{2}$, therefore  we have \eqref{eq:vcd:log:inequalities} with $n=2$ and thus $1+\lambda(F) = \vcd(WF)+\lambda(F)\leq \vcd(\calN_g)$ provided $g\geq 5$.
    
    If $b=2$ and $c_F\geq 2$ we have $b_c=1$ or $b_c=2$. If $b_c=\vcd(WF)=1$, from $(\star)$ we get
    \[\vcdNgoverF \geq 1\]
    and the claim follows from \cref{prop:vcd:inequalities} \eqref{prop:vcd:inequalities:zero}. Similarly, if $b_c=\vcd(WF)=2$ the claim follows from \cref{prop:vcd:inequalities} \eqref{prop:vcd:inequalities:one}. 
\end{description}

\end{proof}

\begin{remark}\label{remark:exceptional3} Note that the previous theorem is valid only when $g\geq 7$. The cases $g=4,5$ where ruled out in the following situations:

\begin{itemize}
    \item In Case 1, where we actually have $\vcd(WF)=1$.
   \item In Case 2, where we either have $\vcd(WF)=1$, or $\vcd(WF)=2$ and we make use of \cref{prop:vcd:inequalities} \eqref{prop:vcd:inequalities:one}. In the latter case we conclude that $\vcd(WF)+\lambda(F)\leq 4$
\end{itemize}
We will deal with this situations in \cref{section:exceptional:cases}.
\end{remark}

\section{Proof of the main theorems}\label{section:proof:main:thm}

\subsection{The closed case}
\begin{proof}[Proof of \cref{thm:main}]
First note that $\calN_g$ for $g=1,2$ is finite \cite{Ham65,Lic63}, thus the claim follows since the three dimensions are zero for a finite group. Next $\vcd(\calN_3)=1$, hence by a well-known theorem of Stallings \cite{St68}, $\calN_3$ is virtually free and so it acts on a tree with finite stabilizers. Therefore $\gdfin(\calN_3)=1$ and the claim follows from \eqref{eq:inequalities:dimensions}.

Since $3\leq\vcd(\calN_g)\leq \cdfin(\calN_g)$ for $g\geq 4$, by \eqref{eq:inequalities:dimensions}, $\cdfin(\calN_g)=\gdfin(\calN_g)$. Thus we only have to prove $\vcd(\calN_g)=\cdfin(\calN_g)$.

The proof of the remaining cases is obtained through the verification of the hypothesis of \cref{thm:aramayona:martinezperez} for $\calN_g$, this is,  for every finite subgroup  $F$ of $\calN_g$, we want to verify
\begin{equation}\label{eq:vcdWF:lambdaF;vcdNg}
    \vcd(WF)+\lambda(F)\leq \vcd(\calN_g)
\end{equation}
We proceed by cases
\begin{enumerate}
    \item If $O_F$ is non-orientable, $g_F\geq 2$, then \eqref{eq:vcdWF:lambdaF;vcdNg} is true for all $g\geq 4$ by \cref{thm:vcd:nonorientable:lambda:inequality:easycases}.
    \item If $O_F$ is non-orientable, $g_F= 1$, then \eqref{eq:vcdWF:lambdaF;vcdNg} is true for all $g\geq 5$ by \cref{thm:vcd:nonorientable:lambda:inequality:hardcases}.
    
    \item If $O_F$ is orientable, $g_F\geq 1$ then \eqref{eq:vcdWF:lambdaF;vcdNg} is true for all $g\geq 4$ by \cref{thm:vcd:orientable:lambda:inequality:easycases}.
    \item If $O_F$ is orientable, $g_F=0$ then \eqref{eq:vcdWF:lambdaF;vcdNg} is true for all $g\geq 7$ by \cref{thm:vcd:orientable:lambda:inequality:hardcases1} and \cref{thm:vcd:orientable:lambda:inequality:hardcases2}.
\end{enumerate}
Now the claim follows for $g\geq 7$. 

The last remaining case is when $g=6$. In this case $\vcd(\calN_6)=7$. Note that the strategy used above cannot be carried out in this case only because the conclusion of \cref{thm:vcd:orientable:lambda:inequality:hardcases2} does not include $g=6$. Moreover, analyzing the proof of \cref{thm:vcd:orientable:lambda:inequality:hardcases2} we can see that the case $g=6$ only is excluded in ``Case 1: $e_F+b=1$'', where we have $\vcd(WF)=1$. Hence it is enough to verify \eqref{eq:vcdWF:lambdaF;vcdNg} in this situation.  By \cite{Conder15} we know that the largest finite group $F$ acting on $\calN_6$ has order $160=(2^5)(5)$, and therefore its length is at most 6. On the other hand $N_6$ does not admit the action of a group with order $128=2^7$. Hence every finite group acting on $N_6$ has length at most 6. Therefore, in the particular case we are dealing with
\[\vcd(WF)+\lambda(F)=1+\lambda(F)\leq 7= \vcd(\calN_6).\]
And this concludes the proof.
\end{proof}

\subsection{The exceptional cases: $g=4,5$}\label{section:exceptional:cases}\hfill

\vskip 10pt

In this section we deal with the cases $g=4,5$, that is, we prove \cref{thm:exeptional:cases}. 
To deal with these cases we make use of the data base \cite{Conder15}.


\begin{proof}[Proof of \cref{{thm:exeptional:cases}}]
The first inequality in both claims follows from \eqref{eq:inequalities:dimensions}.

Let us first work with $\calN_4$. Note that $\vcd(\calN_4)=3$. Let $F$ be a finite subgroup of $\calN_4$.
By \cref{remark:exceptional1} we obtain $\vcd(WF)+\lambda(F)\leq 4$  provided $O_F$ is non-orientable.

Assume now that $O_F$ is orientable. If $g_F\geq 1$ then $\vcd(WF)+\lambda(F)\leq 3$. By \cref{remark:exceptional2} we conclude that $\vcd(WF)+\lambda(F)\leq 4$ whenever $g_F=0$ and $e_F+b\geq 3$.

Finally, by \cref{remark:exceptional3} we only have to deal with $F$ such that $\vcd(WF)=1$. In \cite{Conder15} we see that $F$ has either order less than 12 or it has order 48, 24, 16, 12. In either case $\lambda(F)\leq 5$ since $\lambda(F)$ is bounded by the number of prime factors of $|F|$. Therefore, in this case, $\vcd(WF)+\lambda(F)=1+\lambda(F)\leq 1+5=6$. Now the second inequality in our claim follows from \cref{thm:aramayona:martinezperez}.

\vskip 10pt

Now we work with $\calN_5$.
By \cref{thm:vcd:nonorientable:lambda:inequality:easycases}, and \cref{thm:vcd:nonorientable:lambda:inequality:hardcases} we get $\vcd(WF)+\lambda(F)\leq \vcd(N_5)$ provided $O_F$ is non-orientable. By \cref{thm:vcd:orientable:lambda:inequality:easycases}, and \cref{thm:vcd:orientable:lambda:inequality:hardcases1} we get $\vcd(WF)+\lambda(F)\leq \vcd(N_5)$ provided $O_F$ is orientable and $g_F\geq 1$, or $g_F=0$ and $e_F+b\geq 3$.  If $g_F=0$ and $e_F+b\leq 2$, by \cref{remark:exceptional3} $\vcd(WF)+\lambda(F)\leq 4$ unless possibly when $\vcd(WF)=1$. By \cite{Conder15} the order of $F$ is either less than 12 or it has order 120, 72, 60, 36, 24, 20, 18, or 16. In either case $\lambda(F)\leq 5$ since $\lambda(F)$ is bounded by the number of prime factors in $|F|$. Therefore if $\vcd(WF)=1$ we get $\vcd(WF)+\lambda(F)\leq 1+5=6$. Hence $\cdfin(\calN_5)\leq 6$ by \cref{thm:aramayona:martinezperez}.

\end{proof}

\begin{remark}
Let $F$ be a subgroup of $\calN_4$ of order 48, which exists as showed in \cite{Conder15}. Since $48=(2^4)(3)$, then we know that there exists a subgroup $H$ of $F$ of order $2^4=16$. Hence $\lambda(H)=4$. Therefore $\lambda(F)=5$. On the other hand, we can see in \cite{Conder15} that $O_F$ has signature $(0; +; [-]; \{(2,4,6)\})$, thus $O_F$ is orientable, $g_F=0$ and $b_c=1$. Hence by \eqref{vcd:WF:orientable} we get $\vcd(F)=1$. In conclusion, for this group we get $\vcd(WF)+\lambda(F)=6$, hence the upper bound for $\vcd(\calN_4)$ given in \cref{thm:exeptional:cases} is the lowest one that can be obtained using \cref{thm:aramayona:martinezperez}.
\end{remark}

\begin{remark}
By \cite{Conder15}, we can find a finite group $F$ of order 120 that acts on $\calN_5$ such that $O_F$ is orientable, $g_F=0$, and it has exactly one boundary component with three corner points of orders $(2,4,5)$. Hence by \eqref{vcd:WF:orientable}, $\vcd(WF)=1$.

In \cite{Po20} Francesco Polizzi pointed out that this 120 element group $F$ is isomorphic to $S_5$, thus we have the chain $1<\dbZ/2<\dbZ/2\times \dbZ/2 < A_4<A_5<S_5$. Thus $\lambda(F)\geq5$. On the other hand, since $120=(2^ 3)(3)(5)$ we get $\lambda(F)=5$. Hence
$\vcd(WF)+\lambda(F)= 6.$ Therefore the upper bound for $\vcd(\calN_5)$ given in \cref{thm:exeptional:cases} is the lowest one that can be obtained using \cref{thm:aramayona:martinezperez}.
\end{remark}

\subsection{The case with  punctures and no boundary components}\hfill

\vskip 10pt

For $g\geq 3$ and $n\geq 1$ we have the following \textit{Birman short exact sequence} 
\begin{equation}\label{birman:exact:seq}
    1 \longrightarrow \pi_1(N_{g,n-1}) \longrightarrow P\calNgn{g}{n} \longrightarrow P\calNgn{g}{n-1} \longrightarrow 1,
\end{equation}
where $\pi_1(N_{g,n-1})$ is the fundamental group of the surface $N_g$ minus $n-1$ points. It can be deduced from \cite[Theorem~1]{Gra73} and \cite[Theorem~2.1]{Kor02}. For $g=2$ and $n\geq 2$ a Birman short exact sequence can be deduced from  \cite[Theorem~2, Proposition~2]{Gra73} and  \cite[Theorem~2.1]{Kor02}.

\begin{proof}[Proof of \cref{thm:main:with:punctures}]

To prove (1), we will proceed by cases.

\textbf{Case I: g=1.} If $n=1$ then $\calN_{1,1}$ is finite by \cite[Theorem~4.1]{Kor02}. From now on we assume $n\geq2$. From \cite[pp. 617]{Sc70} we have the following exact sequence 
\begin{equation}
1\to \mathbb{Z}/2 \to P_n(N_1) \to P\calN_{1,n}\to 1    
\end{equation}
where $P_n(N_1)$ denotes the $n$-th pure braid group of $N_1$.

We claim that $\gdfin (P_n(N_1)) =\gdfin (P\calN_{1,n})$. In fact, let $X$ be a model for $\underline{E} P\calN_{1,n}$, then the induced $P_n(N_1)$-action endows $X$ with the structure of a model for $\underline{E}P_n(N_1)$. Thus $\gdfin (P_n(N_1)) \geq\gdfin (P\calN_{1,n})$. On the other hand, if $Y$ is a model for $\underline{E}P_n(N_1)$, then the fixed point set $Y^{\dbZ/2}$ admits a natural action of the normalizer of $\dbZ/2$ in $\underline{E}P_n(N_1)$, and therefore, an action of the Weyl group of $\dbZ/2$, which in this case is isomorphic to $P\calN_{1,n}$. Moreover $Y^{\dbZ/2}$ is a model for $\underline{E} P\calN_{1,n}$ (see for instance \cite[Lemma~1.3]{Lu00}). Therefore $\gdfin (P_n(N_1)) \leq\gdfin (P\calN_{1,n})$ and the claim follows.

We will show that $\gdfin( P_n(N_1))=n-2$. By  the Fadell-Neuwirth short exact sequence  given in \cite{Van1966},  we have 
\begin{equation}
    1 \to \pi_1(N_{1,n})\to P_{n+1}(N_1)\to P_{n} (N_1)\to 1.
\end{equation}
Since $P_2(N_1)$ is finite (see \cite{GG2004}), then $\gdfin (P_2(N_1))=0$. From an inductive argument and using \cite[Theorem 5.16]{Lu05} we conclude that $\gdfin (P_n(N_1))\leq n-2=\vcd (P_n(N_1))$. Therefore  $\gdfin( P_n(N_1))= n-2$. 

\textbf{Case II: $g=2$.} We will use induction over $n$. For $n=1$, from \cite[Theorem~A.5]{MS06}, we know that $\calN_{2,1} = (\dbZ \rtimes \dbZ/2 ) \times \dbZ/2$, then $\calN_{2,1}$ is virtually free and infinite and therefore 
$$\gdfin (P\calN_{2,1}) = 1 = \vcd (P\calN_{2,1}).$$
Suppose the conclusion hold for $n-1$ with $n\geq 2$, applying  Theorem \ref{thm:main}, \cite[Theorem 5.16]{Lu05} and \cite[Lemma~4.4~(1)]{Gui2010} to the Birman short exact sequence (\ref{birman:exact:seq}), we have that 
$$\gdfin (P\calN_{2,n})\leq \gdfin (\pi_1(N_{2,n-1})) +\gdfin ( P\calN_{2,n-1}) = 1+ \vcd (P\calN_{2,n-1})=\vcd (P\calN_{2,n}).$$

\textbf{Case III: $g=3$ or $g\geq 6$. }We will use induction over $n$. For the first case, if $n=1$, by (\ref{birman:exact:seq})  we have
\begin{equation*}
    1 \longrightarrow \pi_1(N_{g}) \longrightarrow P\calNgn{g}{1} \longrightarrow P\calN_g \longrightarrow 1.
\end{equation*}
using Theorem \ref{thm:main}, \cite[Theorem 5.16]{Lu05}, and  a non-orientable version of \cite[Lemma 4.4 (2)]{Gui2010} (the proof is completely analogous),  we have that 
$$\gdfin (P\calN_{g,1})\leq \gdfin (\pi_1(N_{g}))+ \gdfin (P\calN_g) =2+\vcd (P\calN_g) = 2g-3=\vcd (P\calN_{g,1}).$$ 
Now suppose the conclusion holds for $n-1$. Again, applying  Theorem \ref{thm:main}, \cite[Theorem 5.16]{Lu05} and \cite[Lemma 4.4 (1)]{Gui2010} to the Birman short exact sequence (\ref{birman:exact:seq}), we have that 
$$\gdfin (P\calN_{g,n})\leq \gdfin
(\pi_1(N_{g,n-1})) +\gdfin ( P\calN_{g,n-1})
= 1+ \vcd (P\calN_{g,n-1})=\vcd (P\calN_{g,n}).$$
The second and third part of our statement can be proved as in Case III above using \cref{thm:exeptional:cases} instead of \cref{thm:main}. 
\end{proof}

\subsection{The case with at least one boundary component}

\begin{proof}[Proof of \cref{thm:vcd:gd:boundary}]
As $\calNgnb$ is torsion free when $b\geq 1$, it is clear that $\vcd (\calNgnb)=\cd (\calNgnb)=\cdfin  (\calNgnb) $. If $\cd(\calNgnb) \neq 2$, then $ \vcd (\calNgnb)=\cdfin (\calNgnb)=\gdfin(\calNgnb)$ (see the paragraph right below equation \eqref{eq:inequalities:dimensions}). By \cref{prop:vcd:mpcg}, the only cases when $\cd(\calNgnb) = 2$ are $\calN_{1,0}^2$, $\calN_{1,2}^1$ and $\calN_{2,0}^1$. It is known that $\calN_{1,0}^2\cong \dbZ\times\dbZ$ and $\calN_{2,0}^1 \cong \dbZ\rtimes\dbZ$ (see \cite[p.~141]{MS06} and \cite[Theorem~A.7]{MS06} respectively), therefore in these cases the proper cohomological and geometric dimension coincide, since in both cases $\mathbb R^2$ is a model for the classifying space for proper actions.

Finally, we deal with the case $\calN_{1,2}^1$.  We have the following short exact sequence, which appears in \cite[p.~262]{St10},
\begin{equation*}
    1\to \mathcal Z \to P^k\calN_{1,2}^1 \to P^{k+1}\calN_{1,3} \to 1
\end{equation*}
where $\mathcal Z\cong \dbZ$ is generated by the Dehn twist along the boundary component, and the second and third term of the sequence  are  finite index subgroups of the pure mapping class groups $P\calN_{1,2}^1$  and $P\calN_{1,3}$, respectively. Moreover, by \cite[Theorem~6.2]{MS06}, $\mathcal Z$ is the centre of $\calN^1_{1,2}$. Therefore $P^{k+1}\calN_{1,3}$ is a finite index subgroup of $\calN^1_{1,2}/\mathcal Z$. Note that, by \cref{prop:vcd:mpcg}, $\vcd(\calN_{1,3})=1$,  therefore $\calN_{1,3}$, $P^{k+1}\calN_{1,3}$, and $\calN^1_{1,2}/\mathcal Z$ are virtually free. In consequence $\gdfin(\calN^1_{1,2}/\mathcal Z)=1$. Applying \cite[Thm. 5.16]{Lu05} to the short exact sequence 
\[1\to \mathcal Z \to\calN^1_{1,2}\to \calN^1_{1,2}/\mathcal Z\to 1 \] we have that $\gdfin (\calN_{1,2}^1)\leq 2 $, therefore $\gdfin \calN_{1,2}^1=\cd \calN_{1,2}^1=2$. 
\end{proof}

\section{Questions and final remarks}\label{section:questions}

In this section we state several questions that are related to the scope of our main results.

In \cref{thm:main} we proved that the proper cohomological  dimension, the proper geometric dimension, and the virtual cohomological dimension of $\calN_g$ are equal for $g\neq 4,5$. We were not able to deal with $g=4,5$ due to the existence of finite subgroups in $\calN_4$ and $\calN_5$ of large length compared to the virtual cohomological dimension of the corresponding ambient group.
For $g=4,5$ we provide bounds for the proper cohomological dimension of $\calN_g$ in \cref{thm:exeptional:cases}. The following question is very natural.

\begin{question}
Is it true that $\gdfin(\calN_g)=\cdfin(\calN_g)=\vcd(\calN_g)$ for $g=4,5$?
\end{question}



The following question has to do with the  cases not covered in the statement of \cref{thm:main:with:punctures}.

\begin{question}
Is it true that $\gdfin (P\calN_{g,n})=\cdfin (P\calN_{g,n}) =\vcd (P\calN_{g,n})$ for all $n\geq 1$ and $g=4,5$?
\end{question}

In \cref{thm:main:with:punctures} we proved that the proper cohomological  dimension, the proper geometric dimension, and the virtual cohomological dimension of the \emph{pure} mapping class group $P\calN_{g,n}$ are equal for $g\neq 4,5$ and $n\geq 1$. In order to have an analogous statement  for the \emph{full} mapping class group $\calN_{g,n}$ it is enough to have a positive answer for the following question.

\begin{question}
Let $g\geq 1$ and $n\geq 0$. Is it true that $\cdfin(P\calN_{g,n})=\cdfin(\calN_{g,n})$?
\end{question}

\begin{remark}
By \cite{Sa06}, we know that $\cdfin(P\calN_{g,n}^b)=\cdfin(\calN_{g,n}^b)$ for $b\geq 1$ since both $\calN_{g,n}^b$ and $P\calN_{g,n}^b$ are torsion-free (see \cref{thm:main:with:boundaries}) and the former has finite index in the latter group.
\end{remark}

\bibliographystyle{alpha} 
\bibliography{myblib}
\end{document}